\newtheorem{theorem}{Theorem}[section]
\newtheorem{lemma}[theorem]{Lemma}
\newtheorem{proposition}[theorem]{Proposition}
\theoremstyle{definition}
\theoremstyle{remark}
\newtheorem{example}[theorem]{Example}
\newtheorem{remark}[theorem]{Remark}
\newtheorem{corollary}[theorem]{Corollary}
\DeclarePairedDelimiter{\ceil}{\lceil}{\rceil}
\DeclarePairedDelimiter{\floor}{\lfloor}{\rfloor}
\newcommand{\seq}[1]{\{{#1}\}}
\def\EE{\mathbb{E}}
\def\PP{\mathbb{P}}
\def\NN{\mathbb{N}}
\def\ZZ{\mathbb{Z}}
\def\RR{\mathbb{R}}
\begin{document}
\title{Quantitative Strong Laws of Large Numbers}

\author{Morenikeji Neri}
\maketitle
\vspace*{-5mm}
\begin{center}
{\scriptsize 
Department of Computer Science, University of Bath\\
E-mail: mn728@bath.ac.uk}
\end{center}
\maketitle
\begin{abstract}
Using proof-theoretic methods in the style of proof mining, we give novel computationally effective limit theorems for the convergence of the Cesaro-means of certain sequences of random variables. These results are intimately related to various Strong Laws of Large Numbers and, in that way, allow for the extraction of quantitative versions of many of these results. In particular, we produce optimal polynomial bounds in the case of pairwise independent random variables with uniformly bounded variance, improving on known results; furthermore, we obtain a new Baum-Katz type result for this class of random variables. Lastly, we are able to provide a fully quantitative version of a recent result of Chen and Sung that encompasses many limit theorems in the Strong Laws of Large Numbers literature.
\end{abstract}
\noindent
{\bf Keywords:} Laws of Large Numbers; Large deviations; Limit theorems; Proof mining.\\ 
{\bf MSC2020 Classification:} 60F10, 60F15, 03F99
\section{Introduction}
\label{sec:intro}
Throughout this article, fix a probability space $(\Omega, \mathbf{F}, \PP)$ which all the random variables we work with will act on.

The classical Strong Law of Large Numbers is the following fundamental result due to Kolmogorov: 
\begin{theorem}[The classical Strong Law of Large Numbers, c.f.\ Theorem 6.6.1 of \cite{gut2005probability}]\label{thrm:SLLN} 
Suppose $X_1, X_2,\ldots$ are independent, identically distributed (iid) real-valued random variables with  $\EE(|X_1|) < \infty$. Then,
\begin{equation}
\label{eqn:SLLN}
       \frac{1}{n}\sum_{i=1}^nX_i\to \EE(X_1)
\end{equation}
 almost surely, that is, with probability $1$.
\end{theorem}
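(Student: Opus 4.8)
There are several standard proofs of this theorem --- Kolmogorov's original one (maximal inequality, three-series theorem, Kronecker's lemma) and a reverse-martingale argument among them --- but I would run Etemadi's truncation proof, which needs only Chebyshev's inequality and the Borel--Cantelli lemma and, relevant to this paper, only \emph{pairwise} independence of the truncations. First I would reduce to the nonnegative case: writing $X_i = X_i^+ - X_i^-$ with $\EE(X_1^{\pm}) \le \EE(|X_1|) < \infty$, it suffices to show that for iid $X_i \ge 0$ with $\EE(X_1) < \infty$ one has $\frac1n\sum_{i=1}^n X_i \to \EE(X_1)$ almost surely. Then I would truncate at a growing level, putting $Y_i := X_i\mathbf{1}_{\{X_i \le i\}}$ and $T_n := \sum_{i=1}^n Y_i$. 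Since $\sum_{i\ge 1}\PP(X_i \ne Y_i) = \sum_{i\ge1}\PP(X_1 > i) \le \EE(X_1) < \infty$, Borel--Cantelli gives $X_i = Y_i$ for all large $i$ almost surely, so it is enough to study $T_n/n$; and since $\EE(Y_i) = \EE(X_1\mathbf{1}_{\{X_1\le i\}}) \to \EE(X_1)$ by monotone convergence, a Cesaro-averaging step reduces the goal further to $\bigl(T_n - \EE(T_n)\bigr)/n \to 0$ almost surely.

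Next I would establish this along a geometric subsequence. Fix $\alpha > 1$, put $k_n := \lfloor \alpha^n\rfloor$, and note that pairwise independence makes $Y_1,\dots,Y_{k_n}$ pairwise uncorrelated, so Chebyshev's inequality gives, for every $\varepsilon > 0$,
\[
\sum_{n\ge 1}\PP\!\left(\frac{|T_{k_n} - \EE(T_{k_n})|}{k_n} > \varepsilon\right) \;\le\; \frac{1}{\varepsilon^2}\sum_{i\ge 1}\EE(Y_i^2)\sum_{n\,:\,k_n \ge i}\frac{1}{k_n^2}.
\]
The inner sum is at most $C_\alpha i^{-2}$ by a geometric-series estimate, and $\sum_{i\ge1} i^{-2}\EE(X_1^2\mathbf{1}_{\{X_1\le i\}})$ is finite: interchanging the $i$-sum with the expectation and using $\sum_{i\ge m} i^{-2} \le 2/m$ bounds it by $2\,\EE\bigl(\max(X_1,1)\bigr) \le 2\bigl(1+\EE(X_1)\bigr) < \infty$. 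Hence the displayed series converges for every $\varepsilon$, and Borel--Cantelli yields $T_{k_n}/k_n \to \EE(X_1)$ almost surely.

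Finally I would interpolate between consecutive $k_n$. Since the $Y_i$ are nonnegative, $T_n$ is nondecreasing, so for $k_n \le m \le k_{n+1}$,
\[
\frac{k_n}{k_{n+1}}\cdot\frac{T_{k_n}}{k_n} \;\le\; \frac{T_m}{m} \;\le\; \frac{k_{n+1}}{k_n}\cdot\frac{T_{k_{n+1}}}{k_{n+1}},
\]
and letting $m \to \infty$ (so that $n \to \infty$ and $k_{n+1}/k_n \to \alpha$) yields $\alpha^{-1}\EE(X_1) \le \liminf_m T_m/m \le \limsup_m T_m/m \le \alpha\,\EE(X_1)$ almost surely. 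Intersecting the corresponding probability-one events over $\alpha = 1 + 1/j$ with $j \in \NN$ squeezes $\limsup$ and $\liminf$ together and gives $T_m/m \to \EE(X_1)$ almost surely, which with the reductions above proves the theorem.

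The only step that is more than bookkeeping, and the one I regard as the crux, is the joint control of the two series $\sum_i\PP(X_i\ne Y_i)$ and $\sum_i i^{-2}\mathrm{Var}(Y_i)$: the truncation level has to be chosen --- here, $X_i$ cut off at $i$ --- so that both converge under the single hypothesis $\EE(|X_1|) < \infty$, and the finiteness of the second series is precisely where the first moment, rather than a second moment, is used, via the interchange of summation and expectation above. From the standpoint of this paper, this is also the step whose quantitative content --- a rate at which the truncated tails vanish --- must be made explicit before the argument can be turned into an effective one.
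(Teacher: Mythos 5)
The paper does not prove this theorem at all: it is stated as classical background and dispatched with a citation to Gut (Theorem 6.6.1), so there is no in-paper argument to measure you against. Your blind proof is a correct rendition of Etemadi's truncation argument, and the details you supply check out: the reduction to nonnegative variables via $X_i = X_i^+ - X_i^-$, the truncation $Y_i = X_i\mathbf{1}_{\{X_i\le i\}}$ with $\sum_i \PP(X_i\ne Y_i)\le \EE(X_1)<\infty$, the Ces\`aro step using $\EE(Y_i)\to\EE(X_1)$, the Chebyshev/Borel--Cantelli estimate along $k_n=\lfloor\alpha^n\rfloor$ with the interchange giving $\sum_i i^{-2}\EE(X_1^2\mathbf{1}_{\{X_1\le i\}})\le 2\EE(\max(X_1,1))<\infty$ (this is indeed the point where only a first moment is needed), and the monotone interpolation between $k_n$ and $k_{n+1}$ followed by intersecting over $\alpha=1+1/j$. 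It is worth remarking that your choice of route is the one most closely aligned with what the paper actually does elsewhere: the geometric subsequence, the exploitation of monotonicity of partial sums of nonnegative variables, and the positive/negative-part decomposition are exactly the combinatorial skeleton that the paper isolates and quantifies in its general theorem (Theorems \ref{thrm:gen} and \ref{thrm:genquant}) and in the proof of Theorem \ref{thrm:bound}, whereas the textbook source cited for Theorem \ref{thrm:SLLN} follows the Kolmogorov-style machinery (maximal inequality, three-series, Kronecker), which buys the full iid theorem but not the pairwise-independence robustness that your argument (and this paper) cares about. Your closing observation about which step carries the quantitative content is also apt, with the caveat that for the paper's purposes the harder effectivity issue is the Borel--Cantelli/series-convergence step, since a bound on a convergent series does not computably yield a rate, which is precisely why the paper's quantitative statements assume an explicit rate $\Psi$ for the relevant series.
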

For ease write $S_n := \sum_{i=1}^nX_i$ and assume $\EE(X_1) = 0$. By multiple applications of the continuity of the probability measure, one can show (following the nation in \cite{siegmund1975large}) that the conclusion of the Strong Law of Large Numbers is equivalent to the sequence of real numbers 
\begin{equation*}
    P^*_{n,\varepsilon} := \PP\left(\sup_{m \ge n} \left|\frac{S_m}{m}\right| > \varepsilon\right)
\end{equation*}
converging to $0$ as $n \to \infty$\footnote{The proof of equivalence is similar to how one proves Egorov's theorem and is well known in the literature.}, for any $\varepsilon > 0$, and so the computational content of the Strong Law of Large Numbers can be given an explicit meaning by studying the convergence speed of this sequence of real numbers. That is, for each $\varepsilon > 0$ can we find a function $r_\varepsilon: (0,\infty)\to (0,\infty)$ such that 
\begin{equation*}
    \forall \lambda >0\,\forall n \ge r_\varepsilon(\lambda)\, P^*_{n,\varepsilon} \le \lambda?
\end{equation*}
Such a function is known as a rate of convergence, more generally a rate of convergence for a sequence of real numbers $\seq{x_n}$ converging to some real number $x$ is any function $r: (0,\infty) \to (0,\infty)$ satisfying
\begin{equation*}
    \forall \varepsilon > 0 \, \forall n \ge r(\varepsilon)\left( |x_n - x| \le \varepsilon\right).
\end{equation*}
A strictly decreasing rate of convergence, $r$, that has an inverse can easily be converted to an asymptotic upper bound as follows, 
    \begin{equation*}
        \forall n \in \NN \left( |x_n - x| \le r^{-1}(n)\right).
    \end{equation*}
Furthermore, it is also clear that a function bounding a rate of convergence will also be a rate of convergence. 

A particular instance of a rate of convergence that will occur a lot in this article is that for the partial sums of a convergent series, $\sum_{i=1}^\infty x_i < \infty$, of nonnegative real numbers $\seq{x_i}$. In this case, a rate of convergence for the partial sums, $\sum_{i=1}^nx_i$, to their limit, $\sum_{i=1}^\infty x_i$, will be a function $r: (0,\infty) \to (0,\infty)$ satisfying, for all $\varepsilon >0$ and $n \ge r(\varepsilon)$, 
\begin{equation*}
    \sum_{i = n+1}^\infty x_i \le \varepsilon.
\end{equation*}
It has been of interest to study sufficient conditions for $\frac{S_n}{n}\to 0$ almost surely when we weaken the iid condition. In \cite{etemadi1981elementary}, Etemadi demonstrates that $\frac{S_n}{n}\to 0$ almost surely if we only assume the random variables are pairwise iid. Furthermore, Etemadi's proof is rather elementary compared to Kolmogorov's original proof of this result for iid random variables.  

To obtain $\frac{S_n}{n}\to 0$, almost surely, dropping the assumption that the random variables are identically distributed requires that other assumptions are placed on the random variables instead. If we keep the independence condition, one can obtain another classical result of Kolmogorov:
\begin{theorem}[Kolmogorov's Strong Law of Large Numbers]
\label{thrm:kolSL}
Suppose $\seq{X_n}$ is a sequence of independent random variables, each with expected value 0 and 
\begin{equation}
 \label{eqn:varsumfin}
        \sum_{n = 1}^\infty \frac{\mathrm{Var}(X_n)}{n^2} < \infty.
\end{equation}
Then $\frac{S_n}{n}\to 0$ almost surely.
\end{theorem}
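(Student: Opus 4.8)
The plan is to reduce the statement to the almost-sure convergence of the series $\sum_{n=1}^\infty X_n/n$, combined with a purely deterministic summation lemma. First I would set $Y_n := X_n/n$, so that $\seq{Y_n}$ is again a sequence of independent random variables with $\EE(Y_n) = 0$ and, by hypothesis \eqref{eqn:varsumfin}, $\sum_{n=1}^\infty \mathrm{Var}(Y_n) < \infty$. The argument then splits into two largely independent parts: (i) show that $\sum_n Y_n$ converges almost surely; (ii) deduce from (i) that $\frac{1}{n}\sum_{i=1}^n i\,Y_i = \frac{S_n}{n} \to 0$ almost surely.

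For part (i), the key tool is Kolmogorov's maximal inequality: for independent mean-zero random variables with finite variance one has $\PP\left(\max_{1 \le k \le N}\left|\sum_{i=M+1}^{M+k} Y_i\right| > \varepsilon\right) \le \varepsilon^{-2}\sum_{i=M+1}^{M+N}\mathrm{Var}(Y_i)$. Since $\sum_i \mathrm{Var}(Y_i)$ converges, the right-hand side can be made arbitrarily small uniformly in $N$ by taking $M$ large; letting $N \to \infty$ and then $M \to \infty$ shows that the partial sums $\sum_{i=1}^n Y_i$ form a Cauchy sequence with probability one, hence converge almost surely. This is essentially Kolmogorov's one-series theorem.

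For part (ii), I would invoke Kronecker's lemma: if $\seq{b_n}$ is a positive, nondecreasing sequence with $b_n \to \infty$ and $\sum_n a_n/b_n$ converges, then $b_n^{-1}\sum_{i=1}^n a_i \to 0$. Applying this pathwise, for each $\omega$ in the probability-one set on which $\sum_n Y_n(\omega) = \sum_n X_n(\omega)/n$ converges, with $a_n = X_n(\omega)$ and $b_n = n$, yields $\frac{1}{n}\sum_{i=1}^n X_i(\omega) \to 0$, which is the claim.

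The main obstacle is part (i): Kronecker's lemma is a deterministic fact about real series, whereas the almost-sure convergence of $\sum_n Y_n$ carries the genuine probabilistic content, resting on Kolmogorov's maximal inequality and the Cauchy-sequence estimate built from it. From the proof-mining standpoint this is also where the quantitative content sits: a rate of convergence for the tail sums $\sum_{i>M}\mathrm{Var}(Y_i)$ feeds through the maximal inequality to produce a rate (in probability) for the Cauchy property of $\seq{\sum_{i=1}^n Y_i}$, after which a quantitative version of Kronecker's lemma transforms this into a bound on $P^*_{n,\varepsilon}$ in the sense discussed in the introduction. I would expect the careful bookkeeping in that final conversion — controlling how much the quantitative Kronecker step degrades the extracted rate — to be the delicate point.
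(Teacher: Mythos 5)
Your argument is correct, and it is the standard classical proof: pass to $Y_n = X_n/n$, use Kolmogorov's maximal inequality to get the one-series theorem (almost sure convergence of $\sum_n X_n/n$), then apply Kronecker's lemma pathwise. The paper itself states Theorem \ref{thrm:kolSL} only as classical background and gives no proof, so there is nothing in the text for your route to diverge from; on its own terms the proposal is sound, with the only slightly compressed step being the passage from the maximal-inequality tail bound to the almost-sure Cauchy property (take $N\to\infty$, then $M\to\infty$, then intersect over rational $\varepsilon$ -- worth spelling out if you write it up). Two points of contact with the paper are worth noting. First, your part (ii) is exactly the step the paper quantifies later as Lemma \ref{lem:kronecker} (the quantitative Kronecker lemma), so your closing remark about where the quantitative content sits matches the paper's development. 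Second, and more substantively, your part (i) is the step that does \emph{not} survive the weakening the paper is actually interested in: Kolmogorov's maximal inequality fails for merely pairwise independent random variables (the paper notes this when discussing H\'ajek--R\'enyi), which is precisely why the paper's quantitative machinery (Theorems \ref{thrm:gen} and \ref{thrm:genquant}) replaces the maximal-inequality/one-series route by the subsequence argument of Cs\"org\H o et al.\ along the indices $k_s^{\pm}(m)$. So your proof is the right one for the fully independent statement as given, but it is not a template for the pairwise independent results that the paper extracts rates for.
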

In \cite{csorgHo1983strong} Cs\"org\H o et al.\ demonstrate that Kolmogorov's Strong Law of Large Numbers does not hold if we weaken the independence condition, in Theorem \ref{thrm:kolSL}, to even pairwise independence (see  \cite[Theorem 3]{csorgHo1983strong}). They instead obtain the following:
\begin{theorem}[cf.\ Theorem 1 of \cite{csorgHo1983strong}]\label{thrm:csorgo}
Suppose $\seq{X_n}$ is a sequence of pairwise independent random variables, each with expected value 0, satisfying (\ref{eqn:varsumfin}) and 
  \begin{equation}
  \label{eqn:bigoh}
        \frac{1}{n}\sum_{k = 1}^n \EE(|X_k|) = O(1).
    \end{equation}
Then $\frac{S_n}{n}\to 0$ almost surely.
\end{theorem}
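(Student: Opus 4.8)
The plan is to adapt Etemadi's truncation method, tracking where the two hypotheses are used. First I would reduce to nonnegative summands: writing $X_k = X_k^+ - X_k^-$, the assumption $\EE(X_k) = 0$ gives $\EE(X_k^+) = \EE(X_k^-)$, so
\[
\frac{S_n}{n} = \frac1n\sum_{k=1}^n\bigl(X_k^+ - \EE(X_k^+)\bigr) - \frac1n\sum_{k=1}^n\bigl(X_k^- - \EE(X_k^-)\bigr),
\]
and it suffices to prove $\frac1n\sum_{k=1}^n(Z_k - \EE(Z_k)) \to 0$ a.s.\ for each of the nonnegative, pairwise independent sequences $Z \in \{X^+, X^-\}$. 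Since $0\le Z_k\le|X_k|$ and $\EE(X_k)=0$, these satisfy $\EE(Z_k^2) \le \EE(X_k^2) = \mathrm{Var}(X_k)$ and $\EE(Z_k) = \tfrac12\EE(|X_k|)$, so the analogues of (\ref{eqn:varsumfin}) and (\ref{eqn:bigoh}) hold for $Z$. Fix such a sequence and truncate at level $k$: $Y_k := Z_k\mathbf{1}_{\{Z_k\le k\}}$. By Markov's inequality $\PP(Z_k\ne Y_k) = \PP(Z_k > k) \le \EE(Z_k^2)/k^2 \le \mathrm{Var}(X_k)/k^2$, which is summable by (\ref{eqn:varsumfin}); Borel--Cantelli then gives $Z_k = Y_k$ eventually, a.s., so $\frac1n\sum_{k=1}^n(Z_k - Y_k)\to 0$ a.s. Likewise $0\le \EE(Z_k) - \EE(Y_k) = \EE(Z_k\mathbf{1}_{\{Z_k>k\}}) \le \mathrm{Var}(X_k)/k$, and since $\sum_k\mathrm{Var}(X_k)/k^2 < \infty$, Kronecker's lemma gives $\frac1n\sum_{k=1}^n(\EE(Z_k) - \EE(Y_k))\to 0$. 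So the task reduces to showing $(T_n - \EE(T_n))/n\to 0$ a.s., where $T_n := \sum_{k=1}^n Y_k$.

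Next I would run the geometric subsequence argument. For $\alpha > 1$ put $n_j := \lfloor\alpha^j\rfloor$. Pairwise independence gives $\mathrm{Var}(T_{n_j}) = \sum_{k=1}^{n_j}\mathrm{Var}(Y_k)$, and $\mathrm{Var}(Y_k)\le\EE(Y_k^2)\le\EE(Z_k^2)\le\mathrm{Var}(X_k)$, so by Chebyshev's inequality and reversing the order of summation,
\[
\sum_j \PP\bigl(|T_{n_j} - \EE(T_{n_j})| > \varepsilon n_j\bigr) \le \frac1{\varepsilon^2}\sum_k \mathrm{Var}(X_k)\sum_{j\,:\,n_j\ge k}\frac1{n_j^2} \le \frac{C_\alpha}{\varepsilon^2}\sum_k\frac{\mathrm{Var}(X_k)}{k^2} < \infty,
\]
and Borel--Cantelli yields $(T_{n_j} - \EE(T_{n_j}))/n_j\to 0$ a.s.\ along the subsequence (for all rational $\alpha > 1$ at once). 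Since $Y_k\ge 0$ the map $n\mapsto T_n$ is nondecreasing, so for $n_j\le m\le n_{j+1}$ one has $\tfrac{n_j}{n_{j+1}}\tfrac{T_{n_j}}{n_j} \le \tfrac{T_m}{m} \le \tfrac{n_{j+1}}{n_j}\tfrac{T_{n_{j+1}}}{n_{j+1}}$, with $\EE(T_m)/m$ squeezed analogously. Here (\ref{eqn:bigoh}) enters decisively: it forces $\frac1n\sum_{k=1}^n\EE(Y_k)$ to be uniformly bounded and limits the mass $\sum_{k=n_j+1}^{n_{j+1}}\EE(Y_k)$ deposited in a single geometric block, and this is what lets the above squeeze, combined with letting $\alpha\downarrow 1$ along a countable set, collapse to $\limsup_m |(T_m - \EE(T_m))/m| = 0$ a.s.

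I expect this last step to be the main obstacle. In the classical pairwise i.i.d.\ case the truncated means $\EE(Y_k)$ converge, so the squeeze immediately traps $T_m/m$ in a shrinking interval and $\alpha\downarrow 1$ finishes; here the $Y_k$ are not identically distributed and their Cesàro averages need not converge, so a crude squeeze only confines $\limsup$ and $\liminf$ to an interval of positive length. Closing that gap --- showing, via (\ref{eqn:bigoh}) and (\ref{eqn:varsumfin}), that the block increments $\tfrac{1}{n_{j+1}}\sum_{k=n_j+1}^{n_{j+1}}\EE(Y_k)$ are small enough that the confining interval shrinks to $\{0\}$ as $\alpha\downarrow 1$ --- is the delicate core of the argument, and is where the proof of Cs\"org\H{o} et al.\ does its real work.
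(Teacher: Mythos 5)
Your reduction to nonnegative summands, the Chebyshev/interchange-of-summation estimate along $n_j=\floor{\alpha^j}$, and the Borel--Cantelli step are all sound (the truncation $Y_k=Z_k\mathbf{1}_{\{Z_k\le k\}}$ is harmless but superfluous: (\ref{eqn:varsumfin}) already gives finite variances, and your subsequence estimate works for $Z_k-\EE(Z_k)$ directly). However, the step you yourself flag as ``the delicate core'' is a genuine gap, and the mechanism you sketch for it would not work. Condition (\ref{eqn:bigoh}) does \emph{not} force the block mass $\sum_{k=n_j+1}^{n_{j+1}}\EE(Y_k)$ to be small relative to $n_j$ as $\alpha\downarrow 1$: writing $z_n=\sum_{k\le n}\EE(Y_k)\le Wn$, one can have $z_{n_j}$ negligible and $z_{n_{j+1}}$ of order $Wn_{j+1}$ (take the means equal to $Wn_{j+1}/(n_{j+1}-n_j)$ on the block and tiny before it; the Ces\`aro bound is respected), so the block increment can be of order $Wn_j$ no matter how close $\alpha$ is to $1$. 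Consequently the crude monotone squeeze between consecutive $n_j$'s only confines $(T_m-\EE(T_m))/m$ to an interval whose length is of order $W$, and letting $\alpha\downarrow 1$ does not shrink it. So the proposal, as it stands, does not prove the theorem.

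The missing idea --- the actual content of the Cs\"org\H{o} et al.\ argument, which the paper isolates and generalises as Theorem \ref{thrm:gen} --- is a second discretization, of the \emph{values} of the Ces\`aro means rather than of the index set. Since $0\le z_n/n\le W$, cut $[0,W]$ into bands $[s\delta,(s+1)\delta)$ for $0\le s\le L_\delta=\floor{W/\delta}$, and within each geometric block $\{\alpha^p\le n<\alpha^{p+1}\}$ record the smallest and largest indices $k_s^-(p)\le k_s^+(p)$ whose Ces\`aro mean lies in band $s$. The Chebyshev/Borel--Cantelli step is run along all of these (finitely many per $\delta$) subsequences, exactly as in your estimate, yielding (\ref{eqn:sumII}) and hence a.s.\ convergence along each of them. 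Then, for an arbitrary $m$ with $\alpha^p\le m<\alpha^{p+1}$, one compares $m$ with $k_s^\pm(p)$ for the band $s$ containing $z_m/m$: monotonicity of the partial sums gives $S_{k_s^-(p)}\le S_m\le S_{k_s^+(p)}$, while membership in the \emph{same band} gives $|z_m/m-z_{k_s^\pm(p)}/k_s^\pm(p)|\le\delta$, so the sandwich closes with error $\delta+(\alpha-1)W$ on either side, which vanishes as $\delta\to 0$ and $\alpha\to 1$. Without this band decomposition, the oscillation of the (non-convergent) means $z_n/n$ within a single geometric block defeats the squeeze --- precisely the obstruction you identified but did not resolve.
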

This article will study the quantitative content of the Strong Law of Large Numbers when the iid assumption is weakened by calculating explicit rates of convergences for $P^*_{n, \varepsilon}$. We will do this by proving an abstract technical theorem (in Section \ref{sec:gen}) whose quantitative content captures the key combinatorial idea in the proof of Theorem \ref{thrm:csorgo}. Our abstract theorem will allow us to produce our first main contribution:
\begin{theorem}
\label{thrm:bound}
Suppose $\seq{X_n}$ is a sequence of pairwise independent random variables satisfying, $\EE(X_n) = 0$, $\EE(|X_n|) \le\tau$ and $\mathrm{Var}(X_n) \le \sigma^2$,  for all $n \in \NN$ and some $\tau, \sigma >0$. There exists a universal constant $\kappa \le 1536$ such that for all $0< \varepsilon \le \tau$,
  \begin{equation*}
   P^*_{n,\varepsilon} \le \frac{\kappa\sigma^2\tau}{n\varepsilon^3}. 
    \end{equation*}
\end{theorem}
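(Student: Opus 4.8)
The plan is to reduce to non-negative summands via Etemadi's splitting device and then to run a geometric-subsequence argument whose step ratio is tuned to $\varepsilon/\tau$; this is essentially the combinatorial content of the abstract theorem of Section~\ref{sec:gen}, but it is worth recording how the three factors $\sigma^2$, $\tau$, $\varepsilon^{-3}$ arise. Write $X_n = X_n^+ - X_n^-$ and $S_m^\pm := \sum_{i\le m} X_i^\pm$. Since $x\mapsto x^+$ and $x\mapsto x^-$ are Borel, the families $\seq{X_n^+}$ and $\seq{X_n^-}$ are again pairwise independent; from $\EE(X_n)=0$ one gets $\EE(X_n^\pm)\le\EE|X_n|\le\tau$ and $\mathrm{Var}(X_n^\pm)\le\EE(X_n^2)=\mathrm{Var}(X_n)\le\sigma^2$, and, crucially, $\EE(S_m^+)=\EE(S_m^-)$, so that $S_m=(S_m^+ - \EE S_m^+) - (S_m^- - \EE S_m^-)$ with the bias terms cancelling. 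A union bound then reduces Theorem~\ref{thrm:bound} to the assertion that whenever $W_1,W_2,\dots\ge 0$ are pairwise independent with $\EE(W_n)\le\tau$, $\mathrm{Var}(W_n)\le\sigma^2$ and $T_m:=\sum_{i\le m}W_i$, then
\begin{equation*}
  \PP\Big(\sup_{m\ge n}\frac{|T_m-\EE T_m|}{m}>\delta\Big)\le\frac{C\sigma^2\tau}{n\delta^3}
\end{equation*}
for a universal constant $C$ (apply this with $\delta=\varepsilon/2$ to $W=X^+$ and to $W=X^-$; then $\kappa=16C$).

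For this reduced statement I would fix $\beta$ of order $\delta/\tau$ and set $n_k:=\lceil n(1+\beta)^k\rceil$, so the blocks $[n_k,n_{k+1}]$ cover $[n,\infty)$ and, once $n\gtrsim\tau/\delta$, the ratios $r_k:=n_{k+1}/n_k$ satisfy $r_k\le 1+2\beta<3/2$. The decisive point is a gap-filling estimate exploiting that \emph{both} $m\mapsto T_m$ and $m\mapsto\EE T_m$ are nondecreasing: for $n_k\le m\le n_{k+1}$,
\begin{equation*}
  |T_m-\EE T_m|\le\max\big(|T_{n_k}-\EE T_{n_k}|,\,|T_{n_{k+1}}-\EE T_{n_{k+1}}|\big)+(\EE T_{n_{k+1}}-\EE T_{n_k}).
\end{equation*}
Dividing by $m\ge n_k$ and using $\EE T_{n_{k+1}}-\EE T_{n_k}\le(n_{k+1}-n_k)\tau\le 2\beta n_k\tau$, one checks that for $\beta$ small enough that $2\beta\tau+\tfrac{1}{2}r_k\delta\le\delta$ (which forces $\beta\asymp\delta/\tau$, and is where the factor $\tau/\varepsilon$ enters) the event $\{\sup_{m\ge n}|T_m-\EE T_m|/m>\delta\}$ is contained in $\bigcup_k\{|T_{n_k}-\EE T_{n_k}|>\tfrac{\delta}{2} n_k\}$. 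By pairwise independence $\mathrm{Var}(T_{n_k})=\sum_{i\le n_k}\mathrm{Var}(W_i)\le n_k\sigma^2$, so Chebyshev's inequality gives $\PP(|T_{n_k}-\EE T_{n_k}|>\tfrac{\delta}{2} n_k)\le 4\sigma^2/(\delta^2 n_k)$, and summing the geometric series $\sum_k 1/n_k\le\tfrac{1}{n}\cdot\tfrac{1+\beta}{\beta}\asymp\tfrac{\tau}{n\delta}$ produces the required $C\sigma^2\tau/(n\delta^3)$; tracking the constants confirms $\kappa\le 1536$.

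The main obstacle is this gap-filling step combined with the ceiling in the definition of $n_k$: the naive estimate, which bounds $T_m$ and $\EE T_m$ separately and so multiplies the endpoint deviations by the factor $r_k$, is too lossy to close the argument, and one must instead use the additive bound above and verify that the constant $2$ in $r_k\le 1+2\beta$ survives the rounding --- this is precisely what fixes the threshold $n\gtrsim\tau/\varepsilon$ below which the block construction is invalid. In that remaining range one notes that $\EE|X_1|\le\sigma$ (because $\mathrm{Var}(X_1)=\EE X_1^2\ge(\EE|X_1|)^2$ when $\EE X_1=0$), which forces the claimed right-hand side above $1$ except on a small window dispatched by a bare Chebyshev estimate; alternatively one simply invokes the abstract theorem of Section~\ref{sec:gen}, which is formulated so as to absorb exactly this bookkeeping.
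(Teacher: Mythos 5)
Your proposal is correct in substance and follows the same core strategy as the paper --- Etemadi's $X_n=X_n^+-X_n^-$ splitting (with the bias terms cancelling since $\EE S_m^+=\EE S_m^-$), Chebyshev along a geometric subsequence of ratio $1+\Theta(\varepsilon/\tau)$, pairwise independence for variance additivity, and a geometric series --- but your implementation is genuinely more direct. The paper routes everything through the general quantitative Theorem \ref{thrm:genquant}: absolute blocks $[\alpha^p,\alpha^{p+1})$, a partition of the values of $z_n/n$ into bands $[s\delta,(s+1)\delta)$, and the subsequences $k_s^\pm$, precisely because in the general Chen--Sung setting one only has $z_n/n\le W$ and no bound on the individual means. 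You instead exploit that in this special case each $\EE(X_n^\pm)\le\tau$ and that both $T_m$ and $\EE T_m$ are nondecreasing, so the drift across a block is controlled by $(n_{k+1}-n_k)\tau$; this eliminates the banding entirely, your gap-filling inequality and the containment of the bad event in $\bigcup_k\{|T_{n_k}-\EE T_{n_k}|>\tfrac{\delta}{2}n_k\}$ are correct, and tracking constants in the main regime actually gives roughly $\kappa\le 352$, comfortably inside $1536$. What the paper's heavier machinery buys is generality (it is reused verbatim for Theorem \ref{thrm:quantchen}) and, because it inverts a rate of the form $n\ge c/\lambda$, a bound valid for every $n$ with no threshold; your anchored blocks $n_k=\lceil n(1+\beta)^k\rceil$ instead require $n\gtrsim 1/\beta\approx 8\tau/\varepsilon$ for the rounding to respect $r_k\le 1+2\beta$.

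The one soft spot is your treatment of that small-$n$ range. The claim that $\EE|X_1|\le\sigma$ forces the right-hand side above $1$ there is not correct in general: $\tau$ is merely an upper bound on $\EE|X_n|$ and $\varepsilon$ ranges up to $\tau$, which may greatly exceed $\sigma$, so for $n$ just below the threshold the target bound is about $192\sigma^2/\varepsilon^2$, which can be small. A literally ``bare'' Chebyshev estimate also cannot control $P^*_{n,\varepsilon}$, a supremum over infinitely many $m$. The repair is routine, though: for $n<n_0\approx 8\tau/\varepsilon$ write $P^*_{n,\varepsilon}\le\sum_{m=n}^{n_0-1}\PP(|S_m|>m\varepsilon)+P^*_{n_0,\varepsilon}$; the finite sum is at most $\tfrac{\sigma^2}{\varepsilon^2}\cdot\tfrac{n_0}{n}\le\tfrac{10\sigma^2\tau}{n\varepsilon^3}$ by Chebyshev and pairwise independence, and the tail term is handled by your block bound at $n_0$, so the total constant stays far below $1536$. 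Alternatively, your fallback of invoking the theorem of Section \ref{sec:gen} is exactly the paper's route and removes the issue altogether.
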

The above is an improvement of the best known asymptotic upper bound, given by Luzia \cite{luzia_2018}, who showed (with notation as in Theorem \ref{thrm:bound}) that for all $\beta > 1$ and $0<\varepsilon\le \tau$, there exists $N(\beta, \varepsilon, \tau)$ such that, for all $n \ge N(\beta,\varepsilon,\tau)$,
\begin{equation*}
   P^*_{n,\varepsilon} \le \frac{\sigma^2}{n\varepsilon^2}(C_\beta+D_\beta\log(n)^{\beta - 1}),
\end{equation*}
for some $C_\beta, D_\beta >0$ depending only on $\beta$. Thus, if we fix $\varepsilon, \sigma$, and $\tau$ the above tells us that for each $\beta > 1$,
\begin{equation*}
   P^*_{n,\varepsilon} = O\left(\frac{\log(n)^{\beta - 1}}{n}\right)
\end{equation*}
 as $n \to \infty$, for such a class of random variables, whereas the bound in Theorem \ref{thrm:bound} yields:
 \begin{equation*}
   P^*_{n,\varepsilon} = O\left(\frac{1}{n}\right).
\end{equation*}
 Observe we can take $\tau = \sigma$ by Jensen's inequality, so Theorem \ref{thrm:bound} in particular yields
\begin{equation*}
   P^*_{n,\varepsilon} \le \frac{\kappa\sigma^3}{n\varepsilon^3}.
 \end{equation*}
The above bound bears a resemblance to the bound one obtains in the case where the random variables are assumed to be independent, namely, the H{\'a}jek and R{\'e}nyi \cite{hajek1955generalization} states that if $\seq{X_n}$ is a sequence of independent random variables each with expected value $0$ then, for all $\varepsilon > 0$ and $n < m$
\begin{equation*}
    \PP\left(\max_{n\le k \le m}\left|\frac{S_k}{k}\right|> \varepsilon\right) \le \frac{1}{\varepsilon^2}\left(\frac{1}{n^2}\sum_{k=1}^n\mathrm{Var}(X_k) + \sum_{k=n+1}^m \frac{\mathrm{Var}(X_k)}{k^2}\right).
\end{equation*}
 From this, one easily obtains that if  $\seq{X_n}$ is also assumed to have a common bound on their variance $\sigma^2$,
\begin{equation}
\label{eqn:iidHR}
    P^*_{n,\varepsilon} \le \frac{2\sigma^2}{n\varepsilon^2}.
\end{equation}
However, the H{\'a}jek and R{\'e}nyi inequality generalises Kolmogorov's inequality, which is known to fail for pairwise independent random variables; therefore, more work is needed to obtain a bound in this case. 

In addition, through a simple construction, we demonstrate that $O(1/n)$ is the best polynomial bound for $P^*_{n,\varepsilon}$ in the case that $\seq{X_n}$ is iid with finite variance. That is, for each $\delta > 0$, we construct a sequence of random variables with finite variance, satisfying 
 \begin{equation*}
    \PP\left(\sup_{m \ge n} \left|\frac{S_{m}}{m}\right| > \varepsilon\right) \ge \frac{\omega}{n^{1+\delta}}
     \end{equation*}
for some $\omega > 0$ depending only on $\delta$, for all $0< \varepsilon \le 1$. All of these results are in Section \ref{sec:finvar}.\\

The following result is mainly attributed to Baum and Katz \cite{baum1965convergence} as well as Chow \cite{chow1973delayed}:
\begin{theorem}
\label{thrm:baum-katzfull}
    Let $\seq{X_n}$ be a sequence of iid random variables satisfying $\EE(X_1) = 0$ and let $r \ge -1$. Then for all $\varepsilon > 0$, the following are equivalent:
    \begin{itemize}
        \item[(i)] $\EE(|X_1|^{r+2}) < \infty$,
        \item [(ii)]  $\sum_{n = 1}^\infty n^r\PP\left(\left|\frac{1}{n}S_n\right|>\varepsilon\right) < \infty$,
        \item [(iii)]  $\sum_{n = 1}^\infty n^r\PP\left(\sup_{m \ge n}\left|\frac{1}{m}S_m\right|>\varepsilon\right) < \infty$,
        \item [(iv)] $\sum_{n = 1}^\infty n^r\PP\left(\max_{1\le m \le n}|S_m|>n\varepsilon\right) < \infty$.
    \end{itemize}
\end{theorem}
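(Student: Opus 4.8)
The proof follows the classical Baum--Katz--Chow route via truncation and exponential large deviations. Since condition~(i) carries no $\varepsilon$, it suffices to establish: (i)$\Rightarrow$(iv) and (i)$\Rightarrow$(iii) for every $\varepsilon>0$; the implications (iii)$\Rightarrow$(ii) and (iv)$\Rightarrow$(ii), which are immediate from $\sup_{m\ge n}|S_m/m|\ge|S_n/n|$ and $|S_n|\le\max_{m\le n}|S_m|$; and (ii)$\Rightarrow$(i) for a single $\varepsilon$. These close the two cycles (i)$\Rightarrow$(iv)$\Rightarrow$(ii)$\Rightarrow$(i) and (i)$\Rightarrow$(iii)$\Rightarrow$(ii)$\Rightarrow$(i), which together give all four equivalences. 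Throughout, the value of $\varepsilon$ is allowed to change by absolute constants, which is harmless precisely because (i) is $\varepsilon$-free. The two workhorses are the ``layer-cake'' identity $\sum_n n^{r+1}\PP(|X_1|>cn)\asymp c^{-(r+2)}\EE|X_1|^{r+2}$ and a maximal Fuk--Nagaev inequality, the latter being available since the $X_i$ are in particular independent.

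For (i)$\Rightarrow$(iv), fix $\varepsilon>0$, write $p:=r+2\ge1$, and truncate $X_i$ at level $y_n:=\delta n$ for a small constant $\delta$. A maximal Fuk--Nagaev inequality bounds $\PP(\max_{m\le n}|S_m|>n\varepsilon)$ by $n\,\PP(|X_1|>y_n)$ plus a term roughly of the shape $\bigl(Cn\langle X_1^2\rangle_{y_n}/(n\varepsilon\cdot y_n)\bigr)^{n\varepsilon/y_n}$, where $\langle X_1^2\rangle_y:=\EE[\min(X_1^2,y^2)]$. Summing the first term against $n^r$ gives $\sum_n n^{p-1}\PP(|X_1|>\delta n)\asymp\delta^{-p}\EE|X_1|^p<\infty$ by the layer-cake identity. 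For the second term the decisive point is that the exponent $n\varepsilon/y_n=\varepsilon/\delta$ may be taken to be any large constant $A$ by shrinking $\delta$; bounding $\langle X_1^2\rangle_{y_n}$ by $Cy_n^{2-p}$ when $p<2$ and by a constant when $p\ge2$, one finds the term is $O(n^{-cA})$ with $c>0$ depending only on $p$, hence summable against $n^r$ once $A$ is large. The implication (i)$\Rightarrow$(iii) is then obtained from the same estimates via a dyadic decomposition: the event $\{\sup_{m\ge n}|S_m/m|>\varepsilon\}$ is contained in $\bigcup_{2^j\ge n}\{\max_{m\le2^{j+1}}|S_m|>2^j\varepsilon\}$, and reindexing $\sum_n n^r(\cdots)$ over the scales $j$ (using $\sum_{n<2^j}n^r\asymp2^{j(r+1)}$) turns the double sum into a convergent geometric series in $j$ of the type already controlled.

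For (ii)$\Rightarrow$(i), first note that (ii) forces $S_n/n\to0$ almost surely --- after restricting to a geometric subsequence and filling in the blocks one obtains $\PP(|S_n/n|>\varepsilon\text{ i.o.})=0$ by Borel--Cantelli --- and hence $\EE|X_1|<\infty$ (from $X_n/n\to0$ a.s.\ together with the second Borel--Cantelli lemma); in particular $n\,\PP(|X_1|>cn)\le c^{-1}\EE[|X_1|;|X_1|>cn]$ stays bounded. Passing to symmetrized summands $X_i-X_i'$ (which preserves (ii) and the finiteness of $\EE|X_1|^{p}$), the reflection form of L\'evy's inequality gives $\PP(|S_n|>n\varepsilon)\gtrsim 1-\bigl(1-\PP(|X_1-X_1'|>Cn\varepsilon)\bigr)^n\gtrsim n\,\PP(|X_1-X_1'|>Cn\varepsilon)$, using the boundedness just noted. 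Summing against $n^r$ and invoking the layer-cake identity in the reverse direction yields $\EE|X_1-X_1'|^{p}<\infty$, equivalently $\EE|X_1|^{p}<\infty$, which is~(i).

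The crux of the argument --- and the step I expect to be the main obstacle --- is the sufficiency direction for $r\ge0$. There the natural truncation level $\delta n$ is of the same order as the deviation threshold $n\varepsilon$, so a variance-based maximal inequality (Kolmogorov's) yields only $O(1/n)$ decay for the truncated-sum term, which is not summable against $n^r$; recovering enough decay genuinely requires an exponential large-deviation estimate of Fuk--Nagaev type together with a truncation level calibrated so that the ratio of threshold to cut-off is a large constant. A secondary delicate point is the boundary value $r=-1$ (where (ii) is Spitzer's theorem): there the Fuk--Nagaev term ceases to decay and the sufficiency implication needs a separate argument exploiting $\EE|X_1|<\infty$ more carefully (controlling the number of summands exceeding $\delta n$ together with a dyadic blocking), while in the necessity direction some extra care is needed to keep $n\,\PP(|X_1|>cn)$ under control. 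The remaining reductions are routine.
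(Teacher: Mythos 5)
You should know at the outset that the paper itself offers no proof of this theorem: it is quoted as background and attributed to Baum--Katz and Chow, so your attempt can only be measured against the classical literature route --- which is essentially the one you follow (truncation at level $\delta n$, an exponential/Fuk--Nagaev maximal bound, the layer-cake identity $\sum_n n^{r+1}\PP(|X_1|>cn)\asymp c^{-(r+2)}\EE|X_1|^{r+2}$, and symmetrization plus L\'evy's inequality for the necessity). For $r>-1$ your outline is the standard argument and I see no structural problem; the loose ends you leave (desymmetrization, and the ``filling in the blocks'' step when deducing $S_n/n\to 0$ a.s.\ from (ii), which needs an Ottaviani/L\'evy-type maximal inequality rather than plain Borel--Cantelli) are routine.

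The genuine gap is in the sup-condition (iii) at the boundary $r=-1$, and it is not where you placed your warning. Your dyadic containment gives $\sum_n n^{-1}\PP(\sup_{m\ge n}|S_m/m|>\varepsilon)\lesssim\sum_j (j+1)\,\PP(\max_{m\le 2^{j+1}}|S_m|>2^{j}\varepsilon)$, since $\sum_{n\le 2^{j+1}}n^{-1}\asymp j$ rather than $\asymp 2^{j(r+1)}$; the extra factor $j$ forces an $\EE[|X_1|\log^{+}|X_1|]$ moment, not merely $\EE|X_1|<\infty$. This is not an artifact of your method: the implication (i)$\Rightarrow$(iii) is in fact false at $r=-1$. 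Take $X_1$ symmetric with $\PP(|X_1|>x)\sim c/(x\log^2 x)$, so $\EE|X_1|<\infty$ but $\EE[|X_1|\log^{+}|X_1|]=\infty$. If (iii) held for every $\varepsilon>0$, monotonicity of the summand would force $\PP(\sup_{m\ge n}|S_m/m|>\varepsilon)=o(1/\log n)$; but a one-big-jump bound (some $m\ge n$ has $|X_m|>3m\varepsilon$, while $|S_{m-1}|\le 2m\varepsilon$ outside an event whose probability is $o(1/\log n)$ under that same hypothesis) gives $\PP(\sup_{m\ge n}|S_m/m|>\varepsilon)\gtrsim\sum_{m\ge n}\PP(|X_1|>3m\varepsilon)\asymp 1/\log n$, a contradiction. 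So your proposed ``separate argument exploiting $\EE|X_1|<\infty$ more carefully'' can succeed at $r=-1$ for (ii) (Spitzer) and for (iv) (via L\'evy/Ottaviani after symmetrization), but cannot succeed for (iii); the classical statements include the sup-series only for $r>-1$, the boundary sup-result being equivalent to $\EE[|X_1|\log^{+}|X_1|]<\infty$. Your two ``cycles'' therefore close only for $r>-1$, and taken literally at $r=-1$ the (iii)-part of the quoted statement itself needs this caveat; the rest of your sketch is the standard route and sound.
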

To prove this result, independence is crucial. Work has been done to extend this result to the case where the random variables are pairwise independent. It is clear that $(iii)$ and $(iv)$ both imply $(ii)$ in the non-independent case. However, as noted in \cite{bai2014complete}, it is possible that $(iv)$ is strictly stronger than $(ii)$ in the non-independent case, and work has been done in establishing the convergence of $(iv)$ in the case where the random variables are pairwise iid. Many authors have established the following theorem, but the result goes back to Rio \cite{rio1995speeds}.
\begin{theorem}
    Suppose $\seq{X_n}$ are pairwise independent, identically distributed random variables with $\EE(X_1) = 0$. For all $-1 \le r < 0$: $\EE(|X_1|^{2+r}) < \infty$ iff
        \begin{equation*}
        \sum_{n = 1}^\infty n^r\PP\left(\max_{1\le m \le n}|S_m|>n\varepsilon\right) < \infty
    \end{equation*}
for all $\varepsilon > 0$.
\end{theorem}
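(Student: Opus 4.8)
The plan is to prove the two implications of the equivalence separately; the necessity of the moment condition is comparatively soft, while its sufficiency is the substantive half and is where the abstract theorem of Section~\ref{sec:gen} does the work. For necessity, note that $|X_m| = |S_m - S_{m-1}| \le 2\max_{1\le k\le n}|S_k|$ for every $m \le n$, so
\[
\PP\Big(\max_{1\le m\le n}|S_m| > n\varepsilon\Big) \ \ge\ \PP\Big(\bigcup_{m\le n}\{|X_m| > 2n\varepsilon\}\Big),
\]
and the Chung--Erd\H{o}s inequality together with \emph{pairwise} independence gives a lower bound of order $\min(1,\, n\PP(|X_1|>2n\varepsilon))$. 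Using that $\sum_n n^r = \infty$ for $-1\le r<0$, together with the elementary observation that $n\PP(|X_1|>cn)$ — being a linear sequence times a non-increasing one — cannot blow up along a subsequence without exceeding $1$ on whole intervals of $n$ (which would already force the weighted series to diverge), one reduces the assumed summability to $\sum_n n^{r+1}\PP(|X_1|>cn) < \infty$ for every $c>0$; by the standard series--integral comparison this is exactly $\EE(|X_1|^{r+2}) < \infty$.

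For sufficiency, fix $\varepsilon > 0$; since the events $\{\max_{m\le n}|S_m| > n\varepsilon\}$ shrink as $\varepsilon$ grows we may assume $\varepsilon \le \EE|X_1|$. Truncate at level $n$: put $X_k^{(n)} := X_k\mathbf{1}_{\{|X_k|\le n\}}$. On the event that $|X_k| \le n$ for all $k\le n$ one has $S_m = \sum_{k\le m}X_k^{(n)}$ for every $m\le n$, whence
\[
\PP\Big(\max_{1\le m\le n}|S_m| > n\varepsilon\Big) \ \le\ n\,\PP(|X_1| > n) \ +\ \PP\Big(\max_{1\le m\le n}\Big|\sum_{k=1}^m X_k^{(n)}\Big| > n\varepsilon\Big).
\]
The first term is summable against $n^r$ precisely because $\EE(|X_1|^{r+2}) < \infty$ (again by the series--integral comparison). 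For the second, centre: since $\EE X_1 = 0$ and $\EE|X_1| < \infty$ (valid as $r+2 \ge 1$), the bias $\mu_n := \EE(X_1^{(n)}) = -\EE(X_1\mathbf{1}_{\{|X_1|>n\}})$ tends to $0$, so $n|\mu_n| \le n\varepsilon/2$ for all large $n$, and for such $n$ the second term is at most $\PP\big(\max_{1\le m\le n}|\sum_{k\le m}(X_k^{(n)} - \mu_n)| > n\varepsilon/2\big)$ — a statement about partial sums of the \emph{centred, pairwise independent} variables $X_k^{(n)} - \mu_n$, whose variances are bounded by $\sigma_n^2 := \EE((X_1^{(n)})^2)$ and whose first absolute moments are bounded by $2\EE|X_1|$.

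The crux is to bound this last probability by a quantity of order $\sigma_n^2\,\EE|X_1|/(n\varepsilon^3)$. Kolmogorov's inequality and the H\'ajek--R\'enyi inequality are unavailable here because they genuinely fail under mere pairwise independence — this is the difficulty the abstract theorem of Section~\ref{sec:gen} is designed to circumvent, and the idea (the quantitative core of Etemadi's \cite{etemadi1981elementary} and Cs\"org\H{o} et al.'s arguments behind Theorem~\ref{thrm:csorgo}) is to avoid a maximal inequality altogether. Split each summand into positive and negative parts, so that the two resulting partial-sum processes are nondecreasing; then along a geometric subsequence $n_j \asymp \alpha^j$ with ratio $\alpha = 1 + c\varepsilon/\EE|X_1|$, monotonicity lets one dominate $\max_{n_j\le m\le n_{j+1}}|\sum_{k\le m}(X_k^{(n)}-\mu_n)|$ by the values of the two centred monotone partial sums at the endpoints $n_j, n_{j+1}$ (their common means cancelling), up to an error $\le 2(n_{j+1}-n_j)\EE|X_1| \le \tfrac14 n\varepsilon$; each endpoint value is a \emph{single} sum of centred, pairwise uncorrelated variables, to which plain Chebyshev applies with variance $\le n_j\sigma_n^2$; and summing the $O(\log n)$ endpoint contributions — a geometric series dominated by its largest term $\asymp n$ — yields $\sum_j n_j\sigma_n^2/(n\varepsilon)^2 \asymp \sigma_n^2\,\EE|X_1|/(n\varepsilon^3)$, with \emph{no logarithmic loss}. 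I would invoke the theorem of Section~\ref{sec:gen} to supply exactly such an estimate; this is the same mechanism underlying Theorem~\ref{thrm:bound}.

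It then remains to show $\sum_n n^{r-1}\sigma_n^2 < \infty$. Decompose the second moment dyadically: $\sigma_n^2 = \sum_{j=1}^n \EE\big(X_1^2\mathbf{1}_{\{j-1 < |X_1| \le j\}}\big) \le \sum_{j=1}^n j^{-r}a_j$, where $a_j := \EE\big(|X_1|^{r+2}\mathbf{1}_{\{j-1 < |X_1| \le j\}}\big)$ and $\sum_j a_j = \EE(|X_1|^{r+2}) < \infty$; interchanging the order of summation (Tonelli) and using $\sum_{n\ge j}n^{r-1} \asymp j^{r}$, valid since $r-1 < -1$, gives $\sum_n n^{r-1}\sigma_n^2 \lesssim \sum_j j^{-r}a_j\,j^{r} = \sum_j a_j < \infty$. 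Combining the three ingredients — the tail term, the finitely many small-$n$ terms not covered by the centring estimate, and this bound — completes the argument. The one genuinely hard step, and the reason the paper develops the framework of Section~\ref{sec:gen}, is the $\log$-free maximal-type estimate for pairwise independent sums over a growing window; everything else here is bookkeeping once that estimate is in hand.
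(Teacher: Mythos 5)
The paper does not prove this theorem: it is stated as a known result, attributed to Rio \cite{rio1995speeds} and later authors, so there is no in-paper proof to compare you against (the closest in-paper item, Corollary \ref{cor:baum-katznew}, concerns the sup-form series $\sum n^r P^*_{n,\varepsilon}$, a different quantity). Judged on its own terms, your proposal is essentially sound. The necessity half is fine: the inclusion $\{|X_m|>2n\varepsilon \text{ for some } m\le n\}\subseteq\{\max_{m\le n}|S_m|>n\varepsilon\}$, Chung--Erd\H{o}s with pairwise independence giving a lower bound $\ge\tfrac12\min(1,n\PP(|X_1|>2n\varepsilon))$, the observation that $n\PP(|X_1|>2n\varepsilon)\ge1$ along a subsequence would already force divergence of the weighted series (since the probability is non-increasing, the product stays $\ge\tfrac12$ on intervals $[n_k/2,n_k]$, and $\sum_{n\in[n_k/2,n_k]}n^r$ is bounded below even at $r=-1$), and the standard equivalence $\sum_n n^{r+1}\PP(|X_1|>cn)<\infty\Leftrightarrow\EE|X_1|^{r+2}<\infty$. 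The sufficiency scaffolding is also correct: truncation at level $n$ with the union-bound term $n\PP(|X_1|>n)$ summable against $n^r$ exactly by the moment hypothesis, centring once $|\mu_n|\le\varepsilon/2$, and the Tonelli computation $\sum_n n^{r-1}\sigma_n^2\lesssim\EE(|X_1|^{r+2})$, which uses $r-1<-1$ and $X_1^2\le j^{-r}|X_1|^{r+2}$ on $\{j-1<|X_1|\le j\}$.

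The one point to flag is your appeal to Section \ref{sec:gen} to ``supply exactly'' the crux estimate. Theorems \ref{thrm:gen}/\ref{thrm:genquant} and Theorem \ref{thrm:bound} bound $\PP\big(\sup_{m\ge n}\big|\frac{S_m}{m}-\frac{z_m}{m}\big|>\varepsilon\big)$, a tail supremum of self-normalised sums; they do not literally yield $\PP\big(\max_{1\le m\le n}\big|\sum_{k\le m}(X_k^{(n)}-\mu_n)\big|>n\varepsilon/2\big)\lesssim\sigma_n^2\,\EE|X_1|/(n\varepsilon^3)$, and the naive transfer (from $\max_{m\le n}|S_m|>n\varepsilon$ infer $\sup_{m\ge1}|S_m/m|>\varepsilon$ and apply Theorem \ref{thrm:bound} with $n=1$) loses the factor $1/n$ entirely. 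So this finite-window, fixed-threshold maximal estimate must be proved as a standalone lemma rather than quoted. Fortunately, your own sketch does exactly that, and correctly: split into positive and negative parts so the partial sums are monotone, block along $n_j\asymp\alpha^j$ with $\alpha-1\asymp\varepsilon/\EE|X_1|$, dominate each block maximum by the two endpoint centred sums up to an error $\le n\varepsilon/4$, apply Chebyshev at each endpoint (pairwise independence gives variance $\le n_j\sigma_n^2$), and sum the geometric series, paying $\alpha/(\alpha-1)\asymp\EE|X_1|/\varepsilon$ instead of a $\log n$. Indeed your situation is simpler than the one Section \ref{sec:gen} is built for, since the truncated variables are identically distributed and the threshold $n\varepsilon$ is uniform over the window, so no bucketing of the means is needed. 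With that lemma written out in full (and the trivial edge case $\EE|X_1|=0$ noted), the proposal assembles into a complete proof.
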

There does not appear to be any results in the literature for the convergence of the sum $(iii)$, assuming the random variables are pairwise independent. However, a simple application of Theorem \ref{thrm:bound} gives the following:
\begin{corollary}
\label{cor:baum-katznew}
    Suppose $\seq{X_n}$ are pairwise independent, identically distributed random variables with $\EE(X_1) = 0$ and $\mathrm{Var}(X_1) < \infty$. Then, for all $\varepsilon > 0$ and $r<0$:
    \begin{equation*}
        \sum_{n = 1}^\infty n^rP^*_{n,\varepsilon} < \infty.
    \end{equation*}
\end{corollary}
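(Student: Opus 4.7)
The plan is to derive this as a direct consequence of Theorem \ref{thrm:bound}. Since the random variables are identically distributed with finite variance, setting $\sigma^2 := \mathrm{Var}(X_1)$ and $\tau := \EE(|X_1|)$, Jensen's inequality yields $\tau \le \sigma < \infty$, and we have $\mathrm{Var}(X_n) = \sigma^2$ and $\EE(|X_n|) = \tau$ for every $n$, so the hypotheses of Theorem \ref{thrm:bound} are satisfied uniformly in $n$.

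First I would dispense with the degenerate case: if $\tau = 0$, then $X_1 = 0$ almost surely and $P^*_{n,\varepsilon} = 0$ for every $\varepsilon > 0$, so the series trivially converges. Otherwise, I would observe that the function $\varepsilon \mapsto P^*_{n,\varepsilon}$ is monotonically non-increasing, so for $\varepsilon > \tau$ we have $P^*_{n,\varepsilon} \le P^*_{n,\tau}$; hence it suffices to establish the bound for $0 < \varepsilon \le \tau$, since a bound at $\varepsilon = \tau$ gives one (with a worse constant) for all larger $\varepsilon$.

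Next, I would invoke Theorem \ref{thrm:bound} to obtain
\begin{equation*}
P^*_{n,\varepsilon} \le \frac{\kappa \sigma^2 \tau}{n\varepsilon^3}
\end{equation*}
for all $n \in \NN$ and $0 < \varepsilon \le \tau$. Multiplying by $n^r$ and summing gives
\begin{equation*}
\sum_{n=1}^\infty n^r P^*_{n,\varepsilon} \le \frac{\kappa \sigma^2 \tau}{\varepsilon^3} \sum_{n=1}^\infty n^{r-1},
\end{equation*}
and the latter series converges precisely because $r < 0$ forces $r - 1 < -1$.

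There is essentially no obstacle here: the work has already been done in Theorem \ref{thrm:bound}, which supplies the sharp $O(1/n)$ decay needed to make the $p$-series test succeed for every $r < 0$. The only minor issue to flag is that the hypothesis $\varepsilon \le \tau$ in Theorem \ref{thrm:bound} is a genuine constraint, but monotonicity of $P^*_{n,\varepsilon}$ in $\varepsilon$ makes this cosmetic.
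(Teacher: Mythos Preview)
Your proof is correct and follows essentially the same approach as the paper: both deduce the result from the $O(1/n)$ bound of Theorem \ref{thrm:bound} and then sum the resulting $p$-series. You are simply more explicit about the degenerate case $\tau=0$ and the constraint $\varepsilon\le\tau$, which the paper's one-line proof glosses over.
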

\begin{proof}
    This result simply follows from the fact that $ P^*_{n,\varepsilon} = O\left(\frac{1}{n}\right)$.
\end{proof}
Furthermore, it appears to be open whether it is the case that condition $(iii)$ in Theorem \ref{thrm:baum-katzfull} holds in the case $r = 0$ and if the random variables are only assumed to be pairwise independent, which is the case for iid random variables, by Theorem \ref{thrm:baum-katzfull}.\\

In \cite{etemadi1981elementary}, Etemaidi's novel insight in demonstrating that $\frac{S_n}{n}\to 0$ almost surely for pairwise iid random variables was that one could first assume that the random variables were nonnegative, in which case one can take advantage of the monotonicity of the partial sums. The general case is then obtained by using the decomposition of a random variable into its positive and negative parts (that is, writing a random variable, $X$, as $X = X^+ - X^-$ where $X^+ = \max\{X, 0\}$ and $X^- = \max\{-X, 0\}$). Due to this insight, there has been a lot of interest in studying when $\frac{S_n}{n}\to 0$, almost surely, for nonnegative random variables that are not assumed to be iid, as e.g.\ in Petrov \cite{petrov2009}, which was later generalised by Korchevsky et al.\ \cite{korchevsky2010strong} and further generalised again by Korchevsky in \cite{korchevsky2015generalization}. In addition, Chandra et al.\ \cite{chandra1992cesaro} generalise Theorem \ref{thrm:csorgo}, with Chen and Sung \cite{CHEN201680} later producing a result which unifies \cite{chandra1992cesaro} and \cite{korchevsky2015generalization}, as well as generalising results from \cite{birkel1988note,kim1999strong,nili2004strong,jabbari2013almost}. The proofs of all the results Chen and Sung generalise are adaptations of the proof of Theorem \ref{thrm:csorgo}, and they established the following general sufficient condition, which encompasses all the results mentioned:
\begin{theorem}[cf.\ Theorem 2.1 of \cite{CHEN201680}]\label{thrm:chen}
Let $\seq{X_n}$ be a sequence of nonnegative random variables with finite $p$-th moment (for some fixed $p\ge 1$) and respective expected values $\seq{\mu_n}$. Let $S_n := \sum_{k=1}^n X_k$ and $z_n := \sum_{i = 1}^n\mu_i.$ Suppose that
    \begin{equation*}
        \frac{z_n}{n} = O(1)
    \end{equation*}
     and that there exists a sequence of nonnegative real numbers $\seq{\gamma_n}$ satisfying
         \begin{itemize}
             \item $\EE(|S_n - z_n|^p) \le \sum_{k=1}^n \gamma_k$,
             \item $\sum_{n=1}^\infty \frac{\gamma_n}{n^p} < \infty$.
         \end{itemize}
         Then
         \[
         \frac{S_n}{n} - \frac{z_n}{n}\to 0
         \]
         almost surely.
\end{theorem}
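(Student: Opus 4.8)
The plan is to run Etemadi's subsequence argument for the strong law, but with the block decomposition driven by the deterministic sequence $z_n$ rather than by a fixed geometric progression. This adaptation is exactly what is forced on us here: in Etemadi's setting $S_n/n$ has a genuine limit, so sandwiching it between $\alpha$ and $1/\alpha$ multiples of that limit and letting $\alpha \downarrow 1$ suffices, whereas in Theorem~\ref{thrm:chen} only the difference $S_n/n - z_n/n$ converges while $z_n/n$ itself may oscillate throughout $[0,C]$, and a crude geometric block decomposition leaves an irreducible error of order $C$. Write $\Gamma_N := \sum_{k=1}^N \gamma_k$, so the hypotheses read $\EE(|S_N - z_N|^p) \le \Gamma_N$ and $\sum_N \gamma_N/N^p < \infty$, and set $C := \sup_n z_n/n < \infty$. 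Fixing parameters $\alpha > 1$ and $\delta > 0$, I would define the adapted subsequence $k_0 := 1$ and
\[
k_{j+1} \;:=\; \min\Bigl\{\, n > k_j \;:\; n > \alpha k_j \ \text{ or }\ \bigl|\, z_n/n - z_{k_j}/k_j \,\bigr| > \delta \,\Bigr\},
\]
and work with the augmented index set $\Lambda := \{k_j : j \ge 0\} \cup \{k_j - 1 : k_j \ge 2\}$ --- including the predecessors $k_j - 1$ is a small device to absorb a possibly large jump $\mu_{k_{j+1}}$ of $z$ sitting exactly at the right endpoint of a block.

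The proof then rests on three ingredients. (i) A \emph{sparsity estimate}, $\sum_{\ell \in \Lambda}\Gamma_\ell/\ell^p < \infty$. (ii) A \emph{deviation estimate along $\Lambda$}: Markov's inequality gives $\PP(|S_\ell - z_\ell| > \varepsilon\ell) \le \Gamma_\ell/(\varepsilon^p \ell^p)$, so (i) and Borel--Cantelli yield $|S_\ell - z_\ell|/\ell \to 0$ almost surely as $\ell \to \infty$ through $\Lambda$. (iii) A \emph{gap-filling estimate}: for $k_j \le m \le k_{j+1}-1$, monotonicity of $S_\cdot$ and of $z_\cdot$ --- this is where nonnegativity of the $X_k$ is used --- combined with $|z_m/m - z_{k_j}/k_j| \le \delta$ and $(k_{j+1}-1)/k_j \le \alpha$ on that range gives, after an elementary sandwiching,
\[
\left|\frac{S_m}{m} - \frac{z_m}{m}\right| \;\le\; C\Bigl(\alpha - \tfrac1\alpha\Bigr) + \alpha\delta + \alpha\left(\frac{|S_{k_j} - z_{k_j}|}{k_j} + \frac{|S_{k_{j+1}-1} - z_{k_{j+1}-1}|}{k_{j+1}-1}\right).
\]
Combining (ii) and (iii), for each fixed $\alpha,\delta$ there is a probability-$1$ event on which $\limsup_m |S_m/m - z_m/m| \le C(\alpha - 1/\alpha) + \alpha\delta$; intersecting these events along sequences $\alpha_i \downarrow 1$, $\delta_i \downarrow 0$ forces the $\limsup$ to $0$, which is Theorem~\ref{thrm:chen}.

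The real work, I expect, is the sparsity estimate (i). The clause $n > \alpha k_j$ on its own makes $\{k_j\}$ geometric, hence of logarithmic density, after which $\sum_{\ell}\Gamma_\ell/\ell^p$ is comparable to $\sum_n \gamma_n/n^p$ by the usual interchange of summation; the risk is that the $z_n/n$-clause inserts too many extra indices. The key observation that rules this out is that the total variation of $n \mapsto z_n/n$ over a dyadic block is bounded by an absolute multiple of $C$: from the identity $z_n/n - z_{n-1}/(n-1) = \mu_n/n - z_{n-1}/(n(n-1))$ one gets
\[
\sum_{a < n \le b}\left|\frac{z_n}{n} - \frac{z_{n-1}}{n-1}\right| \;\le\; \frac1a\sum_{a < n \le b}\mu_n + C\sum_{a < n \le b}\frac1n \;\le\; \frac{z_b}{a} + C\log\frac ba,
\]
which is $O(C)$ once $b/a$ is bounded. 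Since each firing of the $z_n/n$-clause moves $z_\cdot/\cdot$ by more than $\delta$, at most $O(C/\delta)$ of them --- together with at most $O(1/\log\alpha)$ firings of the $n > \alpha k_j$ clause --- can occur while $k_j$ traverses a dyadic block; hence $\{k_j\}$, and so $\Lambda$, has at most some $K(\alpha,\delta)$ indices per block, and the comparison with $\sum_n \gamma_n/n^p$ goes through. The remaining pieces, the Markov/Borel--Cantelli step in (ii) and the sandwiching in (iii), are routine. Finally, carrying the same block decomposition out quantitatively would produce an explicit rate of convergence for $\PP(\sup_{m \ge n}|S_m/m - z_m/m| > \varepsilon)$, from which Theorem~\ref{thrm:chen} follows a fortiori; that is presumably the route the paper takes through the abstract theorem of Section~\ref{sec:gen}.
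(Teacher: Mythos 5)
Your proposal is correct, but it takes a genuinely different route from the paper. The paper (Theorem \ref{thrm:gen} combined with Lemma \ref{lem:sumbound} and the quantitative Kronecker lemma, Lemma \ref{lem:kronecker}) keeps the fixed geometric blocks $[\alpha^m,\alpha^{m+1})$ and instead discretizes the \emph{range} of $z_n/n$ into the finitely many levels $[s\delta,(s+1)\delta)$, $0\le s\le\lfloor W/\delta\rfloor$; for each block and each level it selects the minimal and maximal admissible indices $k_s^\pm(m)$, verifies complete convergence along these finitely many subsequences via the generalized Chebyshev inequality, an interchange/splitting of the resulting double sum and Kronecker's lemma applied to $n^{-p}\sum_{m\le n}\gamma_m$, and then sandwiches an arbitrary $m$ between $k_s^-(p)$ and $k_s^+(p)$ exactly as in your step (iii). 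The payoff of that level-set device is that for fixed $s$ the selected indices automatically satisfy $k_s^\pm(n)\ge\lfloor\alpha^n\rfloor$, so geometric sparsity --- and hence summability --- is immediate with no density argument, and the construction is explicit enough to yield the fully quantitative Theorem \ref{thrm:quantchen}. Your single greedy subsequence, advancing whenever the index grows by a factor $\alpha$ or $z_n/n$ moves by more than $\delta$, replaces that device by the total-variation bound for $n\mapsto z_n/n$ over ratio-bounded blocks, which is the genuinely new ingredient of your argument and does hold ($\sum_{a<n\le b}\left|z_n/n-z_{n-1}/(n-1)\right|\le z_b/a+C\log(b/a)$), giving at most $O(C/\delta+1/\log\alpha)$ selected indices per dyadic block; the interchange $\sum_{\ell\in\Lambda}\ell^{-p}\Gamma_\ell=\sum_m\gamma_m\sum_{\ell\in\Lambda,\,\ell\ge m}\ell^{-p}\le K\,2^{p+1}\sum_m\gamma_m/m^p$ then makes your sparsity estimate (i) correct and even lets you bypass Kronecker's lemma, which the paper needs only because it must convert a rate for $\sum_m\gamma_m/m^p$ into a rate for $k^{-p}\sum_{m\le k}\gamma_m$ when extracting explicit bounds. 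Two minor bookkeeping points: in (iii) the oscillation term should be $2\alpha\delta$ rather than $\alpha\delta$ (both $z_m/m$ and the ratio at the block endpoints are only within $\delta$ of $z_{k_j}/k_j$), and your density count must use a slightly enlarged block since $k_{j+1}$ may overshoot the dyadic block by a factor $\alpha$; neither affects the limit, so the qualitative conclusion of Theorem \ref{thrm:chen} follows as you describe.
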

Building on our first main contribution, which was an improvement of the bound in \cite{luzia_2018}, our second main contribution will be a fully quantitative version of Theorem \ref{thrm:chen}.
\begin{theorem}\label{thrm:quantchen}
Let $\seq{X_n}$ be a sequence of nonnegative random variables with finite $p$-th moment (for some fixed $p\ge 1$) and respective expected values $\seq{\mu_n}$. Let $S_n := \sum_{k=1}^n X_k$ and $z_n := \sum_{i = 1}^n\mu_i$. Suppose there exists a sequence of nonnegative real numbers $\seq{\gamma_n}$ satisfying
$$\EE(|S_n - z_n|^p) \le \sum_{k=1}^n \gamma_k$$
and
$$\sum_{m = 1}^\infty \frac{\gamma_m}{m^p} \le \Gamma$$
for some $\Gamma \ge 1$, with the partial sums of the above series converging to their limit with a strictly decreasing rate of convergence $\Psi$. Furthermore, assume for all $n \in \NN$, 
 $$\frac{z_n}{n} \le W,$$ 
for some $W \ge 1$. Then for all $0< \varepsilon \le 1, \lambda> 0$ and all
    $$ n \ge A_p\left(\frac{W\Gamma}{\lambda\varepsilon^{p+1}}\right)^{\frac{1}{p}}\Psi\left(\frac{B_p\lambda\varepsilon^{p+1}}{W}\right),$$
it holds that
        \begin{equation*}
       \PP\left(\sup_{m \ge n} \left|\frac{S_m}{m} -\frac{z_m}{m}\right| > \varepsilon\right)\le \lambda.
    \end{equation*}
   Here, $A_p$ and $B_p$ are constants that only depend on p.
\end{theorem}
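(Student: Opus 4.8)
\emph{The plan} is to run the Etemadi--Cs\"org\H o dyadic-block argument, but with a subsequence \emph{adapted to} $\seq{z_n}$: one controls $|S_m/m-z_m/m|$ along a sparse subsequence via Markov's inequality and the $p$-th moment hypothesis, and interpolates to all $m$ using that $\seq{S_n}$ and $\seq{z_n}$ are nondecreasing (since $X_n\ge 0$). Write $c_m:=\sum_{k\le m}\gamma_k$, so $\EE|S_m-z_m|^p\le c_m$, and fix $\delta:=\varepsilon/4$. First I would build $q_0<q_1<\cdots$ greedily from the dyadic landmarks $2^0,2^1,2^2,\dots$: if $q_l\in[2^j,2^{j+1})$, let $q_{l+1}$ be the largest integer with $q_l<q_{l+1}\le 2^{j+1}$ and $z_{q_{l+1}}-z_{q_l}\le\delta q_l$, setting $q_{l+1}:=q_l+1$ when no such integer exists. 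Then every $2^j$ is some $q_l$, one has $q_l\le q_{l+1}\le 2q_l$, and for each $l$ either $z_{q_{l+1}}-z_{q_l}\le\delta q_l$ or $q_{l+1}=q_l+1$. The decisive observation is that each block $[2^j,2^{j+1})$ contains at most $CW/\varepsilon$ of the $q_l$ ($C$ absolute): for any three consecutive break-points inside the block one has $z_{q_{l+2}}-z_{q_l}>\delta q_l\ge\delta 2^j$ (by maximality of $q_{l+1}$, or because a forced jump at $q_l+1$ already exceeds $\delta q_l$), and telescoping these disjoint increments against $z_{2^{j+1}}-z_{2^j}\le z_{2^{j+1}}\le 2W\cdot 2^j$ bounds the count. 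Here the hypothesis $z_n/n\le W$ enters in an essential way.

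Next, for the interpolation, set $G_n:=\bigcap_{l:\,q_l\ge n/2}\{|S_{q_l}-z_{q_l}|\le\delta q_l\}$, and I would show $\sup_{m\ge n}|S_m/m-z_m/m|\le 3\delta<\varepsilon$ on $G_n$. Given $m\ge n$, take the sub-block $q_l\le m\le q_{l+1}$ (both endpoints are $\ge n/2$, since $q_{l+1}\ge m\ge n$ and $q_{l+1}\le 2q_l$). If $q_{l+1}=q_l+1$ then $m$ is a break-point and $|S_m/m-z_m/m|\le\delta$. Otherwise monotonicity gives $S_{q_l}\le S_m\le S_{q_{l+1}}$ and $z_{q_l}\le z_m\le z_{q_{l+1}}$, hence
\[
\frac{S_{q_l}-z_{q_{l+1}}}{m}\ \le\ \frac{S_m-z_m}{m}\ \le\ \frac{S_{q_{l+1}}-z_{q_l}}{m};
\]
bounding each side by dividing through by $q_l$ when its numerator is nonnegative (and noting the bound is trivial otherwise), inserting $\pm z_{q_{l+1}}$ resp.\ $\pm z_{q_l}$, and using $z_{q_{l+1}}-z_{q_l}\le\delta q_l$ together with $|S_{q_i}-z_{q_i}|\le\delta q_i\le 2\delta q_l$ on $G_n$, yields $|S_m/m-z_m/m|\le 3\delta$.

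It then remains to estimate $\PP(G_n^c)$. By the above, $\PP(\sup_{m\ge n}|S_m/m-z_m/m|>\varepsilon)\le\PP(G_n^c)\le\delta^{-p}\sum_{l:\,q_l\ge n/2}c_{q_l}q_l^{-p}$ by Markov termwise; grouping the $q_l$ by dyadic block and using the count $CW/\varepsilon$ together with $c_{q_l}\le c_{2^{j+1}}$, $q_l\ge 2^j$, this is $\lesssim_p W\varepsilon^{-1}\delta^{-p}\sum_{j:\,2^j\ge N}c_{2^{j+1}}2^{-jp}$ for $N\asymp n$. The final, most delicate, estimate is $\sum_{j:\,2^j\ge N}c_{2^{j+1}}2^{-jp}\lesssim_p\eta$: interchanging summation in $\sum_j 2^{-(j+1)p}\sum_{k\le 2^{j+1}}\gamma_k$ produces a tail term $\sum_{k>2N}\gamma_k/k^p\le\eta$ (valid once $2N\ge\Psi(\eta)$) plus a term $c_{2N}(2N)^{-p}$, and splitting $c_{2N}$ at $\lceil\Psi(\eta)\rceil$ and using $\sum_{k\le\lceil\Psi(\eta)\rceil}\gamma_k\le\lceil\Psi(\eta)\rceil^p\,\Gamma$ (crudely from $\sum_k\gamma_k/k^p\le\Gamma$) bounds this by $2\eta$ as soon as $2N\ge\Psi(\eta)(\Gamma/\eta)^{1/p}$. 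Taking $N\asymp n$ and choosing $\eta:=B_p\lambda\varepsilon^{p+1}/W$ — legitimate since we may assume $\lambda\le 1$, so $\eta\le 1\le\Gamma$ — makes $\PP(G_n^c)\le\lambda$, and the constraint $2N\ge\Psi(\eta)(\Gamma/\eta)^{1/p}$ unwinds to precisely $n\ge A_p(W\Gamma/(\lambda\varepsilon^{p+1}))^{1/p}\,\Psi(B_p\lambda\varepsilon^{p+1}/W)$.

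\emph{The hard part} will be the first step — designing and counting the adapted subsequence. A purely geometric subsequence fails: when $z_n$ makes a large jump inside a block, the crude bound $S_m\le S_{n_{j+1}}$ discards the cancellation between $S_m$ and $z_m$ and the interpolation in the second step breaks; one genuinely must insert break-points at the scale of $z$, and the whole argument survives only because $z_n/n\le W$ limits these to $O(W/\varepsilon)$ per dyadic block, which is exactly what keeps the series in the third step summable with the correct dependence on $W,\Gamma,\varepsilon$. The secondary delicate point is the interchange-of-summation estimate relating $\sum_j c_{2^{j+1}}2^{-jp}$ to the rate $\Psi$ and the bound $\Gamma$, since that is where the precise shape $(W\Gamma/\lambda\varepsilon^{p+1})^{1/p}\,\Psi(B_p\lambda\varepsilon^{p+1}/W)$ of the final threshold is produced.
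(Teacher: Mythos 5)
Your argument is correct in its essentials, but it is a genuinely different route from the paper's. The paper factors the proof through a general quantitative transfer theorem (Theorem \ref{thrm:genquant}) built on the Cs\"org\H o-style subsequence $k_s^{\pm}(m)$: geometric blocks of ratio $\alpha = 1+\tfrac{\varepsilon}{3W}$, subdivided into $\lfloor W/\delta\rfloor+1$ strips according to the value of $z_n/n$, with the min and max of each strip as representatives; the series along these representatives is then controlled by a H\'ajek--R\'enyi-type tail estimate (Lemma \ref{lem:sumbound}) whose ``initial segment'' term is handled by a quantitative Kronecker lemma (Lemma \ref{lem:kronecker}). You instead build an adaptive subsequence inside plain dyadic blocks, inserting breakpoints so that $z_{q_{l+1}}-z_{q_l}\le\delta q_l$, prove the $O(W/\varepsilon)$-per-block count from $z_n/n\le W$ by telescoping, interpolate via monotonicity of $S$ and $z$, and then bound $\PP(G_n^c)$ by a direct termwise Markov union bound; your split of $c_{2N}(2N)^{-p}$ at $\lceil\Psi(\eta)\rceil$ using $\sum_{k\le M}\gamma_k\le M^p\Gamma$ is in effect an inlined version of the paper's quantitative Kronecker lemma. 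Your bookkeeping differs in an interesting way: because your good event controls $|S_{q_l}-z_{q_l}|/q_l$ and the $z$-increments are tracked directly, the block ratio only inflates $\delta$ (so dyadic blocks suffice and your subsequence has $O((W/\varepsilon)\log n)$ terms), whereas in the paper the ratio multiplies $W$, forcing $\alpha-1\asymp\varepsilon/W$ and a denser family of representatives; both bookkeepings nevertheless land on the same threshold shape $A_p\bigl(W\Gamma/(\lambda\varepsilon^{p+1})\bigr)^{1/p}\Psi\bigl(B_p\lambda\varepsilon^{p+1}/W\bigr)$. What the paper's route buys is reusability: Theorem \ref{thrm:genquant} also drives the finite-variance application of Section \ref{sec:finvar}, while your argument is self-contained but single-purpose. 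Two small points to tidy when writing it up: in the interpolation it is cleaner to divide by $m$ and use $q_{l+1}\le 2q_l\le 2m$ (giving $|S_m/m-z_m/m|\le 3\delta$ directly) than your ``divide by $q_l$ when the numerator is nonnegative'' phrasing; and in the counting step you should note the edge case where the greedy choice lands exactly on $2^{j+1}$, which is harmless since that breakpoint belongs to the next block, and that the cap at $2^{j+1}$ is what guarantees every power of two is a breakpoint.
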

 From Theorem \ref{thrm:quantchen}, one can obtain quantitative versions of many of the ``Strong Law of Large Numbers''-type results discussed above. For example, we can easily obtain a quantitative version of Theorem \ref{thrm:csorgo}:
\begin{theorem}\label{thrm:quant:cs}
    Suppose $\seq{X_n}$ is a sequence of pairwise independent random variables, each with expected value $0$ and finite variance. Let $S_n := \sum_{k=1}^n X_k$ and $z_n := \sum_{i = 1}^n\EE(|X_i|)$. Further, assume
    $$\sum_{n = 1}^\infty \frac{\mathrm{Var}(X_n)}{n^2} \le \Gamma$$
 for some $\Gamma \ge 1$ and that the partial sums of the above series converge to their limit with a strictly decreasing rate of convergence $\Psi$. Furthermore, assume for all $n \in \NN$, 
 $$\frac{z_n}{n} \le W,$$ 
 for some $W \ge 1$.
    For all $0<\varepsilon \le 1, \lambda> 0$ and all 
    $$ n \ge A\left(\frac{W\Gamma}{\lambda\varepsilon^3}\right)^{\frac{1}{2}}\Psi\left(\frac{B\lambda\varepsilon^{3}}{W}\right),$$
    it holds that
        \begin{equation*}
         P^*_{n,\varepsilon} = \PP\left(\sup_{m \ge n} \left|\frac{S_m}{m}\right| > \varepsilon\right)\le \lambda.
    \end{equation*}
   Here, $A$ and $B$ are universal constants.
\end{theorem}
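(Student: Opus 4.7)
The plan is to reduce the statement to Theorem~\ref{thrm:quantchen} via Etemadi's classical trick of decomposing each variable into its positive and negative parts. Writing $X_i = X_i^+ - X_i^-$ with $X_i^\pm := \max\{\pm X_i, 0\}$, I would set $S_n^\pm := \sum_{i=1}^n X_i^\pm$ and $z_n^\pm := \EE(S_n^\pm)$; since $\EE(X_i) = 0$ forces $\EE(X_i^+) = \EE(X_i^-) = \EE(|X_i|)/2$, one has $z_n^\pm = z_n/2$. Then the identity
\[
\frac{S_m}{m} \;=\; \left(\frac{S_m^+}{m} - \frac{z_m^+}{m}\right) - \left(\frac{S_m^-}{m} - \frac{z_m^-}{m}\right),
\]
combined with the triangle inequality and a union bound, reduces the desired estimate $P^*_{n,\varepsilon} \le \lambda$ to proving
\[
\PP\left(\sup_{m \ge n}\left|\frac{S_m^\pm}{m} - \frac{z_m^\pm}{m}\right| > \varepsilon/2\right) \le \lambda/2
\]
for each choice of sign.

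Next I would verify that Theorem~\ref{thrm:quantchen} applies, with $p = 2$, to each of the nonnegative sequences $\seq{X_n^+}$ and $\seq{X_n^-}$ using the same parameters $\Gamma, \Psi, W$ as in the hypotheses. Pairwise independence is preserved under the Borel maps $x \mapsto x^\pm$, so Bienaym\'e's identity yields
\[
\EE(|S_n^\pm - z_n^\pm|^2) \;=\; \sum_{k=1}^n \mathrm{Var}(X_k^\pm) \;\le\; \sum_{k=1}^n \EE((X_k^\pm)^2) \;\le\; \sum_{k=1}^n \mathrm{Var}(X_k),
\]
so the second hypothesis of Theorem~\ref{thrm:quantchen} is satisfied with $\gamma_k := \mathrm{Var}(X_k)$, and both the bound $\Gamma$ and the rate $\Psi$ are inherited directly from the assumptions; furthermore $z_n^\pm/n = z_n/(2n) \le W$. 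Invoking Theorem~\ref{thrm:quantchen} with $\varepsilon,\lambda$ replaced by $\varepsilon/2, \lambda/2$ gives the estimate provided
\[
n \;\ge\; A_2\left(\frac{W\Gamma}{(\lambda/2)(\varepsilon/2)^3}\right)^{1/2}\Psi\left(\frac{B_2(\lambda/2)(\varepsilon/2)^3}{W}\right),
\]
and a short manipulation of the powers of $2$ rewrites this as $n \ge 4A_2(W\Gamma/(\lambda\varepsilon^3))^{1/2}\Psi(B_2\lambda\varepsilon^3/(16W))$. Setting $A := 4A_2$ and $B := B_2/16$ then produces precisely the threshold in the statement.

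I do not anticipate any genuine obstacle here: all of the combinatorial and probabilistic substance has been absorbed into Theorem~\ref{thrm:quantchen}, together with the well-known fact that pairwise independence and the variance bound pass to the positive and negative parts. The only steps requiring a little care are the bookkeeping of the $\varepsilon/2$ and $\lambda/2$ splits and their propagation through the cube in $\varepsilon$ and the argument of $\Psi$, and verifying that the condition $W \ge 1$ is preserved by the decomposition, which follows immediately from $z_n^\pm/n \le z_n/n \le W$.
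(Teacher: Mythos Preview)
Your proposal is correct and follows essentially the same route as the paper: decompose $X_n = X_n^+ - X_n^-$, apply Theorem~\ref{thrm:quantchen} with $p=2$ and $\gamma_k$ dominated by $\mathrm{Var}(X_k)$ to each nonnegative sequence, and then combine via the triangle-inequality/union-bound argument (which the paper phrases as ``arguing as in Proposition~\ref{thrm:betterrate}''). The only cosmetic difference is that the paper takes $\gamma_n = \mathrm{Var}(X_n^\pm)$ rather than $\mathrm{Var}(X_n)$, but since $\mathrm{Var}(X_n^\pm) \le \mathrm{Var}(X_n)$ the same $\Gamma$ and $\Psi$ work either way, and your explicit tracking of the $\varepsilon/2$, $\lambda/2$ factors through to $A = 4A_2$, $B = B_2/16$ is exactly the bookkeeping the paper leaves implicit.
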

All of these results are in Section \ref{sec:chen-sung}.\\

This article can be seen as a contribution to the
proof mining program, which aims to use tools and ideas from logic to extract quantitative data and generalisations (through removing superfluous assumptions, for example) from
proofs in mathematics, as well as explain certain computational phenomena one observes when extracting data from proofs, whether that is the independence of a bound from certain parameters of the stated result or the additional quantitative assumptions needed for the extraction of a bound. For example, observe that all of the quantitative results we give in this section enjoy a lot of uniformity. That is, our rates are independent of the distribution of the random variables, the underlying probability space and measure. Furthermore, notice the roles $\Gamma$ and $W$ play in the two previously stated theorems. The rates are not dependent on exact limits; instead, the bounds for such limits suffice. Such uniformity is explained by the underlying logical methods coming from proof mining, which will be briefly discussed in Section \ref{subsec:proof mining}.
\subsection{Related work}
\label{subsec:related}
The study of large deviations in the Strong Law of Large Numbers starts with Cram\'{e}r's 1938 article \cite{cramer1938new}, where he determined large deviation probabilities for the sums of iid random variables up to asymptotic equivalence. Furthermore, in this work, he introduced the moment generating function condition (the moment generating function of the random variables is finite on an interval), which has become a standard assumption in this area.

The subsequent notable work in this direction was in 1960 by Bahadur and Ranga Rao \cite{bahadur1960deviations}, where they built on Cram\'{e}r's work to calculate large deviation probabilities for the weak law of large numbers up to asymptotic equivalence (again assuming the moment generating function condition from Cram\'{e}r).

Then, in 1975, Siegmund \cite{siegmund1975large} (see also \cite{Fill:83:Largedev}) was able to determine $P^*_{n,\varepsilon}$ up to asymptotic equivalence, again assuming the moment generating function condition. Thus, \cite{siegmund1975large} provides the first quantitative interpretation of the Strong Law of Large Numbers. Furthermore, Siegmund's bounds heavily depend on the distribution of the random variables, so although their results are much stronger than those in this article, they assume a lot more about the sequence of random variables.

Not much work has been done to study the large deviations without strong conditions, such as the moment-generating function condition. This may be because, for weaker conditions, one cannot hope to calculate these probabilities up to asymptotic equivalence. The best we can hope for are bounds on the large deviation probabilities. As discussed already, in 2018, Luzia \cite{luzia_2018} obtained distribution independent bounds under milder assumptions on the random variables, which are improved in this article.

Work has been done to study the large deviation probabilities for sequences of random variables that are not necessarily identically distributed. In 1943, Feller was able to generalise  Cram\'{e}r's 1938 article to random variables that are not necessarily identically distributed; however, his assumptions were too restrictive (he assumed the random variables only took values in finite intervals) that the result was not a complete generalisation of Cram\'{e}r's. Petrov \cite{petrov1954generalization}, in 1954, was able to provide a full generalisation of Cram\'{e}r's result and has been able to strengthen this result (by relaxing the moment generating function condition) a further two times, with the most recent in 2006 \cite{petrov122006large} jointly with Robinson.

We also note that Pointwise Ergodic Theorem can be used to show that the Strong Law of Large Numbers also holds for stationary sequences of random variables and obtaining rates for $P^*_{n,\varepsilon}$, in this case, has been of great interest. For example, Gaposhkin \cite{gaposhkin2002some} provides an asymptotic upper bound for $P^*_{n,\varepsilon}$ (which they demonstrate is optimal) for second-order stationary sequences of random variables with finite variance, with more recent work being done by Kachurovskii on this topic, see \cite{kachurovskii2021maximum,kachurovskii2019measuring}.

 Baum-Katz type rates have been obtained in the Strong Law of Large Numbers for nonnegative random variables where both the independence and identical distribution conditions are weakened. In 2018, Korchevsky \cite{korchevsky2018rate} obtained a Baum-Katz type rate for the Chen-Sung Strong Law of Large Numbers \cite{CHEN201680} under stronger assumptions. This result generalised the work of Kuczmaszewska \cite{kuczmaszewska2016convergence}, in 2016, who was able to obtain rates for a Strong Law of Large Numbers result of Korchevsky in \cite{korchevsky2015generalization}, under stronger assumptions. No Baum-Katz type rates have been found for the full results in \cite{korchevsky2015generalization} and \cite{CHEN201680}.

Lastly, Baum-Katz type results can be used to obtain results concerning large deviation probabilities. For example, if $\seq{X_n}$ are iid random variables with, $\EE(X_1) = 0$ and $\mathrm{Var}(X_1) < \infty$ then condition $(iii)$ of Theorem \ref{thrm:baum-katzfull} with $r = 0$ implies,
     \begin{equation*}
        P^*_{n,\varepsilon} = o\left(\frac{1}{n}\right).
    \end{equation*}
The above is a stronger result than what one gets in (\ref{eqn:iidHR}) through the H{\'a}jek and R{\'e}nyi inequality (although one must assume more, namely that the random variables are identically distributed). Unlike the bound in  Theorem \ref{thrm:bound}, this result is ineffective in the sense that it does not explicitly tell you the constant $C$ such that $ P^*_{n,\varepsilon} \le \frac{C}{n}$, in addition, one cannot determine, a priori, that such a constant is independent of the distribution of the random variables. Furthermore, the bound in Theorem \ref{thrm:bound} only requires the assumption that the random variables are pairwise independent. 
\subsection{Proof mining}
\label{subsec:proof mining}
Applied proof theory (or proof mining) is a research area which aims to use tools and ideas from logic to extract quantitative data from proofs that appear nonconstructive. Although the program has its origins in Kreisel's ``unwinding'' program of the 1950s \cite{kreisel:51:proofinterpretation:part1,kreisel:52:proofinterpretation:part2}, its emergence as a fully substantiated subfield of applied logic was due to the work of Kohlenbach and his collaborators. This program has enjoyed a lot of success in analysis (see \cite{kohlenbach:08:book} for a comprehensive overview of the program and the recent survey papers \cite{kohlenbach:19:nonlinear:icm,kohlenbach2017recent} for applications) but has also expanded into various other areas of mathematics, where we in particular mention Tauberian theory \cite{powell2020note,powell2023finitization}, differential algebra \cite{simmons2019proof} and probability theory \cite{arthan2021borel, Avigad-Dean-Rute:Dominated:12,AVIGAD-GERHARDY-TOWSNER:10:Ergodic}, with the latter references being the only applications of the techniques of proof mining to probability theory so far. The results in this article are similarly obtained via
this logical perspective, hence, can be seen as another case study of
proof mining in probability theory, breaking ground in the extraction of
quantitative data from limit theorems.

Deriving rates of convergences from convergence results about sequences of real numbers as well as sequences taking values in abstract spaces, using these tools from logic, is a standard occupation in applied proof theory (see, for example, \cite{colao2011alternative,kohlenbach2019moduli,findling2024rates,powell2021rates}). Furthermore, one of the critical features of proof mining are the logical metatheorems (see e.g. \cite{kohlenbach2005some,gerhardy2008general,pischke2023logical,puaunescu2022proof}) that guarantee the extractability of very uniform quantitative information, such as rates of convergence, from proofs of results that can, at least in principle, be formalised in specific formal logical systems. Furthermore, these metatheorems provide exact algorithms that take formal proofs of these results and output such quantitative data.\\ 

Although, in principle, the algorithms given by the metatheorems are implementable on a computer, so the process of going from a formal proof to quantitative data is fully automatable,\footnote{This is already a feature available on the proof assistant Coq \cite{letouzey2008extraction} and the author has done work on formalising some results in the proof mining literature on the proof assistant Lean \url{https://github.com/Kejineri/Proof-mining-/}} there are drawbacks to doing this in everyday mathematical practices. For one, the process of fully formalising a mathematical proof is already completely nontrivial; furthermore, the outputted quantitative data will most probably not be readable or useful to any human, especially when a proof uses more exotic logical principles. Rather, the key feature of these algorithms that allows them to have use in everyday mathematical practice is their modularity. That is, one does not have to formalise the entire proof: If one formalises the parts of the proof that (appear to) use nonconstructive reasoning, the algorithms from the metatheorems combined with normal mathematical intuition can be used to extract (readable) quantitative content by hand. Furthermore, the metatheorems tell us how the quantitative versions of each of the lemmas required to prove the result should fit together to get a fully quantitative result. 

Jointly with Pischke \cite{NeriPischke2023}, the author has recently introduced a logical system, along with corresponding metatheorems, that for the first time provided such results for probability theory and hence shed light on the quantitative nature of probability theory as seen from this logical perspective. Looking at the Laws of Large Numbers through the lens of such a formal system has allowed for success in the extraction of the quantitative results, including the results in this article as well as upcoming work from the author \cite{Ner2023}. Furthermore, we claim that such a paradigm shift can be very beneficial in establishing new quantitative results in the context of Laws of Large Numbers and potentially even other areas in probability theory. For example, the improvement of the bound in \cite{luzia_2018} given as Theorem \ref{thrm:bound} is a testament to how analysing proofs via such logical methods allows for one not to introduce further complexity, which can be easily done when one tries to obtain computational content in a more ad hoc manner. This is because mathematical proofs inherently contain computational content,\footnote{This can be seen informally as a consequence of what is known as the Curry-Howard correspondence.} which can become obscured when one presents a proof in non-formal ``normal" mathematical language. However, this computational content becomes a lot more apparent when studying a proof more formally.\\ 

The last feature of proof mining metatheorems we would like to note is the explanatory power of observed phenomena in everyday mathematics. As briefly noted at the end of the first subsection, the bounds obtained in this paper are very uniform. Such uniformities seem to occur a lot in quantitative probability theory as
developed using this logical methodology, as was explicitly noted in the seminal paper of proof mining in probability \cite{Avigad-Dean-Rute:Dominated:12}. In \cite{NeriPischke2023}, the first logical explanation of this phenomenon by a novel extension of strong majorizability due to Bezem \cite{Bez1985} is given. In that way, also the uniformities of the present
results can be recognized as an a priori guaranteed feature of our
approach to quantitative probability theory via these logical methods.\\ 

As common in the context of applied proof theory, while
this logical background was crucial for obtaining the results of this
paper, we do not assume any familiarity with the concepts or techniques
of proof mining and, even further, none of the results or proofs
presented here, make any explicit use of such methods.
\section{A General Theorem}
\label{sec:gen}
In this section, we shall state and prove the general quantitative theorem we alluded to in Section \ref{sec:intro}. This theorem will be a quantitative version of (a generalisation of) a critical step in proving \cite[Theorem 1]{csorgHo1983strong}, which is a result that has been modified many times to obtain various ``Strong Law of Large Numbers''-type results as discussed in the introduction.\\

For this, we now first introduce the following definitions that are mostly as presented in \cite{csorgHo1983strong}: Let $\seq{X_n}$ be a sequence of nonnegative random variables with respective expected values $\seq{\mu_n}$. Let $S_n := \sum_{k=1}^n X_k$, $z_n := \sum_{i = 1}^n\mu_i$ and suppose there exists $W>0$ such that
\[
\frac{z_n}{n} \le W
\]
for all $n \in \NN$. Further, we make use of the following definitions:

\begin{itemize}
    \item For each $\delta > 0$, let $L_\delta := \floor{\frac{W}{\delta}}$.\\
    \item For each $\delta > 0$, $\alpha > 1$ and natural numbers $m$ and $0 \le s \le L_\delta$, let 
    \[
    C_{\alpha,s,\delta,m} :=\left\{\alpha^m \le n < \alpha^{m+1} \mid \frac{z_n}{n} \in [s\delta,(s+1)\delta)\right\}.
    \] 
    \item  Let $k_s^-(m):= \min C_{\alpha,s,\delta,m}$ and $k_s^+(m):= \max C_{\alpha,s,\delta,m}$ if $C_{\alpha,s,\delta,m}$ is nonempty.\\
    \item  Let  $k_s^-(m) = k_s^+(m) := \floor{\alpha^m}$ if $C_{\alpha,s,\delta,m}$ is empty.
\end{itemize}
One should note that $k_s^+(m)$ and $k_s^-(m)$ depend on $\delta, \alpha$ but (following the convention of \cite{csorgHo1983strong}) we hid this dependence to make the notation less cumbersome. We shall also adopt the convention (used in \cite{csorgHo1983strong}) that $k_s^{\pm}(m)$ being used in a relationship (an equation, an inequality, a limit, etc.) is short-hand for that relationship holding for both $k_s^+(m)$ and $k_s^-(m)$.\\

Our general theorem is now the following:
\begin{theorem}
\label{thrm:gen}
For all $\varepsilon, \delta > 0$, $\alpha > 1$ and $0 \le s \le L_\delta$, if
\begin{equation}
    \label{eqn:sumII}
    \sum_{n=1}^\infty \PP\left(\left|\frac{S_{k_s^\pm(n)} }{k_s^\pm(n)}- \frac{z_{k_s^\pm(n)}}{k_s^\pm(n)}\right| > \varepsilon\right) < \infty,
\end{equation}
then \footnote{Recall that the use of the $\pm$ notation means we are actually assuming the convergence of two sums in the premise.}
 \begin{equation*}
    \frac{S_n}{n} -\frac{z_n}{n} \to 0
\end{equation*}
almost surely.   
\end{theorem}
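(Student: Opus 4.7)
The plan is to combine the first Borel--Cantelli lemma with the block structure provided by the sets $C_{\alpha,s,\delta,m}$ and the monotonicity of $\seq{S_n}$ (which comes for free from $X_n \ge 0$): the former upgrades the assumed summability into an almost sure eventual bound on $|S_{k_s^\pm(m)}/k_s^\pm(m) - z_{k_s^\pm(m)}/k_s^\pm(m)|$, and the latter lets us sandwich $S_n/n$ between $S_{k_s^\pm(m)}/k_s^\pm(m)$ up to controlled errors. Finally, we let the parameters $\varepsilon,\delta \to 0^+$ and $\alpha \to 1^+$ along a countable sequence.

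First, fix arbitrary $\varepsilon,\delta > 0$ and $\alpha > 1$. Since $s$ ranges over the finite set $\{0,1,\ldots,L_\delta\}$, the hypothesis together with the first Borel--Cantelli lemma yields an almost sure event $E_{\varepsilon,\delta,\alpha}$ on which there exists a (random) $M$ such that, for all $m \ge M$ and all $0 \le s \le L_\delta$,
\[
\left|\frac{S_{k_s^\pm(m)}}{k_s^\pm(m)} - \frac{z_{k_s^\pm(m)}}{k_s^\pm(m)}\right| \le \varepsilon.
\]

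Next, given any $n \ge \alpha^M$, pick $m$ with $\alpha^m \le n < \alpha^{m+1}$, and let $s$ be the unique index with $z_n/n \in [s\delta,(s+1)\delta)$; since $0 \le z_n/n \le W$, this $s$ satisfies $s \le L_\delta$. Then $n \in C_{\alpha,s,\delta,m}$, which is in particular nonempty, so $k_s^-(m) \le n \le k_s^+(m)$ and by monotonicity $S_{k_s^-(m)} \le S_n \le S_{k_s^+(m)}$. Combining this sandwich with the previous bound, the ranges $z_{k_s^\pm(m)}/k_s^\pm(m) \in [s\delta,(s+1)\delta)$, the ratio estimate $k_s^+(m)/k_s^-(m) < \alpha$, and the uniform bound $s\delta \le W$, a routine calculation (the lower-bound direction requiring a small case split on whether $s\delta \ge \varepsilon$) gives
\[
\left|\frac{S_n}{n} - \frac{z_n}{n}\right| \le \alpha\varepsilon + \alpha\delta + W(\alpha-1)
\]
on $E_{\varepsilon,\delta,\alpha}$ for every such $n$.

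Finally, take a countable collection $(\varepsilon_k,\delta_k,\alpha_k)$ of parameters with $\alpha_k\varepsilon_k + \alpha_k\delta_k + W(\alpha_k - 1) \to 0$; on the almost sure event $\bigcap_k E_{\varepsilon_k,\delta_k,\alpha_k}$ we obtain $\limsup_{n \to \infty} |S_n/n - z_n/n| = 0$, which is the desired almost sure convergence. The main subtlety is propagating the monotonic sandwich $S_{k_s^-(m)} \le S_n \le S_{k_s^+(m)}$ into the inequality on ratios above: the factor $\alpha$ enters through both $k_s^+(m)/n \le \alpha$ (upper direction) and $k_s^-(m)/k_s^+(m) > 1/\alpha$ (lower direction), and the case split is forced because the naive lower estimate $S_{k_s^-(m)}/k_s^-(m) \ge s\delta - \varepsilon$ must be replaced by the trivial $\ge 0$ when $s\delta < \varepsilon$.
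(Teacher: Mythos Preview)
Your proposal is correct and follows essentially the same route as the paper: Borel--Cantelli over the finitely many indices $s$, the block decomposition via $C_{\alpha,s,\delta,m}$, the monotonicity sandwich $S_{k_s^-(m)}\le S_n\le S_{k_s^+(m)}$, and then letting the parameters tend to their limits. The only cosmetic difference is in the sandwich calculation: the paper manipulates the quantity $\frac{1}{k_s^\pm}(S_{k_s^\pm}-z_{k_s^\pm})$ as a whole (see the chain culminating in its inequality (\ref{eqn:complicatedmaths})), which avoids any sign issue and hence any case split, whereas you separate $S$ and $z$ and therefore need the small case distinction on $s\delta\ge\varepsilon$ for the lower bound. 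Both yield bounds of the form $O(\varepsilon)+O(\delta)+W(\alpha-1)$, so the conclusions coincide.
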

\begin{proof}
The Borel-Cantelli Lemma and (\ref{eqn:sumII}) implies that for all $\varepsilon, \delta > 0$, $\alpha > 1$ and all $0 \le s \le L_\delta$:
\begin{equation}
\label{eqn:conv0}
    \frac{1}{k_s^\pm(n)}S_{k_s^\pm(n)} - \frac{1}{k_s^\pm(n)}z_{k_s^\pm(n)} \to 0
\end{equation}
almost surely. For all $m \in \NN$, we can take a natural number $0 \le s \le L_{\delta}$ such that 
\begin{equation}
    \label{eqn:mdelta}
    \frac{1}{m}z_m \in [s\delta,(s+1)\delta)
\end{equation}
since $z_n/n \le W$ and $L_\delta = \floor{\frac{W}{\delta}}$. Thus, if we take $p \in \NN$ such that $\alpha^p \le m < \alpha^{p+1}$, then  $m \in C_{\alpha,s,\delta,p}$ by definition, so $C_{\alpha,s,\delta,p}$ is non-empty. Therefore, $k_s^-(p) \le m \le k_s^+(p)$ and, since  $k_s^\pm(p) \in C_{\alpha,s,\delta,p}$, we have 
$$\frac{1}{k_s^\pm(p)}z_{k_s^\pm(p)} \in [s\delta,(s+1)\delta)$$
which implies, by (\ref{eqn:mdelta}), that
\begin{equation}
\label{eqn:ledelta}
        \left|\frac{1}{m}z_m - \frac{1}{k_s^\pm(p)}z_{k_s^\pm(p)}\right| \le \delta.
    \end{equation}
Now we have the following chain of inequalities, 
\begin{equation}
\label{eqn:complicatedmaths}
\begin{aligned}
       & -\delta -\left(1-\frac{1}{\alpha}\right)W +\frac{1}{\alpha}\frac{1}{k_s^-(p)}\left(S_{k_s^-(p)} - z_{k_s^-(p)}\right) \\
       &\qquad\qquad\le -\delta - \left(1-\frac{1}{\alpha}\right)\frac{1}{k_s^-(p)}z_{k_s^-(p)} + \frac{1}{\alpha}\frac{1}{k_s^-(p)}\left(S_{k_s^-(p)} - z_{k_s^-(p)}\right)\\
       &\qquad\qquad  \le \frac{1}{m}S_{k_s^-(p)} - \frac{1}{m}z_m \\
       &\qquad\qquad  \le \frac{1}{m}(S_m - z_m)\\
       &\qquad\qquad  \le \frac{1}{m}S_{k_s^+(p)} - \frac{1}{k_s^+(p)}z_{k_s^+(p)} + \delta\\
       &\qquad\qquad  \le \frac{\alpha}{k_s^+(p)}\left(S_{k_s^+(p)} - z_{k_s^+(p)}\right) + (\alpha - 1)W + \delta.
\end{aligned}
\end{equation}
Here, the first inequality follows since
\[
\frac{1}{k_s^-(p)}z_{k_s^-(p)} < W.
\]
The second inequality follows from expanding brackets, using  (\ref{eqn:ledelta}) and the fact that $m \le \alpha k_s^-(p)$ (since $m \in C_{\alpha,s,\delta,p}$, so by definition, $m < \alpha^{p+1}$ and $k_s^-(p) \in C_{\alpha,s,\delta,p}$, and so $\alpha^p \le k_s^-(p) $). The third inequality follows from the fact that $\seq{S_n}$ is monotone (since $\seq{X_n}$ is nonnegative) and  $k_s^-(p) \le m$. The remaining inequalities are justified using similar reasoning to the above (see also \cite{csorgHo1983strong}).

Thus, by (\ref{eqn:conv0}) and the fact that $p \to \infty$ as $m \to \infty$, we have
\begin{equation*}
    -\delta -\left(1-\frac{1}{\alpha}\right)W \le \liminf_{n \to \infty}\frac{1}{m}(S_m - z_m)\le \limsup_{n \to \infty}\frac{1}{m}(S_m - z_m) \le (\alpha - 1)W + \delta
\end{equation*}
almost surely. So, taking $\delta \to 0$ and $\alpha \to 1$ gives our result.
\end{proof}

\begin{remark}
$\seq{X_n}$ (not assumed to be nonnegative) is said to converge completely to 0 if
\begin{equation*}
\sum_{n=1}^\infty \PP(|X_n| > \varepsilon) < \infty
\end{equation*}
for all $\varepsilon > 0$. Hsu and Robbins first introduced this notion of convergence in \cite{hsu1947complete}, where they demonstrated that if $\seq{X_n}$ were iid random variables with finite variance (again, not assumed to be nonnegative), then
\begin{equation*}
    \frac{S_n}{n} - \EE(X_1)
\end{equation*} 
converges to $0$ completely. Furthermore, complete convergence implies almost sure convergence by the Borel-Cantelli Lemma, so Theorem \ref{thrm:gen} says that if specifically chosen sub-sequences of
\begin{equation*}
    \frac{S_n}{n} -\frac{z_n}{n}
\end{equation*} 
converge completely to 0, then
\begin{equation*}
    \frac{S_n}{n} -\frac{z_n}{n}
\end{equation*} 
converges to $0$ almost surely.
\end{remark}
\begin{remark}
To prove \cite[Theorem 1]{csorgHo1983strong}, it is shown that 
\begin{equation}
\label{eqn:CTsum}
\sum_{n=1}^\infty \mathbb{E}\left(\left(\frac{S_{k_s^\pm(n)}}{k_s^\pm(n)} - \frac{z_{k_s^\pm(n)}}{k_s^\pm(n)}\right)^2\right) < \infty.
\end{equation}
(\ref{eqn:sumII}) in Theorem \ref{thrm:gen} follows from this by Chebyshev's inequality, so the result in \cite{csorgHo1983strong} follows by our theorem. Therefore, Theorem \ref{thrm:gen} generalises the key step in proving \cite[Theorem 1]{csorgHo1983strong}.
\end{remark}

We now give a quantitative version of Theorem \ref{thrm:gen}:
\begin{theorem}
\label{thrm:genquant}
Suppose for each $\varepsilon, \delta > 0$, $\alpha > 1$ and $0 \le s \le L_\delta$:
\begin{equation}
\sum_{n=1}^\infty \PP\left(\left|\frac{S_{k_s^\pm(n)}}{k_s^\pm(n)} - \frac{z_{k_s^\pm(n)}}{k_s^\pm(n)}\right| > \varepsilon\right) < \infty.
\end{equation}
 Furthermore, suppose that the partial sums of both sums converge to their respective limits with a rate of convergence  $\Lambda_{\varepsilon, \delta,\alpha}: \RR \to \RR$, independent of $s$.\footnote{We can always obtain a rate independent of $s$ by taking the maximum value of all such rates that depend on $s$, as $s$ can only take the value of finitely many natural numbers. Furthermore, if both sums (the plus one and the minus one) have different rates, we can obtain one that works for both by taking the maximum of the two rates.} 
 
 More explicitly, for each $\varepsilon, \delta > 0$, $\alpha > 1$, $0 \le s \le L_\delta$, $\lambda > 0$ and $p \ge \Lambda_{\varepsilon, \delta,\alpha}$, we have,
 \begin{equation*}
\sum_{n=p+1}^\infty \PP\left(\left|\frac{S_{k_s^-(n)}}{k_s^-(n)} - \frac{z_{k_s^-(n)}}{k_s^-(n)}\right| > \varepsilon\right)\le \lambda\\ \mbox{ and } \sum_{n=p+1}^\infty \PP\left(\left|\frac{S_{k_s^+(n)}}{k_s^+(n)} - \frac{z_{k_s^+(n)}}{k_s^+(n)}\right| > \varepsilon\right)\le \lambda.
 \end{equation*}
Then, for all $\varepsilon > 0$,
$$ \PP\left(\sup_{m \ge n} \left|\frac{S_m}{m} -\frac{z_m}{m}\right| > \varepsilon\right) \to 0$$  
with a rate of convergence given by 
$$\Phi_{\varepsilon,\Lambda}(\lambda) := \alpha^{ \Pi_\varepsilon(\lambda)},$$
where
\[
\Pi_\varepsilon(\lambda):= \Lambda_{\frac{\varepsilon}{3\alpha},\frac{\varepsilon}{3},\alpha} \left(\frac{\lambda}{2}\right) + 1\text{ and }\alpha := 1 + \frac{\varepsilon}{3W}.
\]
\end{theorem}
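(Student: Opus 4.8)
The plan is to derive a quantitative version of the proof of Theorem~\ref{thrm:gen} by carefully tracking the three-part estimate \eqref{eqn:complicatedmaths}. The key observation is that the chain of inequalities there shows that, for a given $m$ with $\alpha^p \le m < \alpha^{p+1}$ and $s$ chosen so that $z_m/m \in [s\delta, (s+1)\delta)$, one has
\[
\left|\frac{S_m}{m} - \frac{z_m}{m}\right| \le \alpha\left|\frac{S_{k_s^\pm(p)}}{k_s^\pm(p)} - \frac{z_{k_s^\pm(p)}}{k_s^\pm(p)}\right| + (\alpha - 1)W + \delta,
\]
where the $\pm$ is read as: the upper bound uses $k_s^+(p)$ and the lower bound (after negation) uses $k_s^-(p)$. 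So if we want $|S_m/m - z_m/m| \le \varepsilon$, it suffices to arrange $(\alpha-1)W + \delta \le 2\varepsilon/3$ and $\alpha|S_{k_s^\pm(p)}/k_s^\pm(p) - z_{k_s^\pm(p)}/k_s^\pm(p)| \le \varepsilon/3$, i.e.\ the deviation along the subsequence is at most $\varepsilon/(3\alpha)$. This dictates the choices $\alpha := 1 + \varepsilon/(3W)$ and $\delta := \varepsilon/3$, which make $(\alpha - 1)W + \delta = \varepsilon/3 + \varepsilon/3 = 2\varepsilon/3$, leaving room $\varepsilon/3$ for the subsequence term.

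Next I would convert this into a statement about the supremum over $m \ge n$. Fix $\lambda > 0$ and set $P := \Pi_\varepsilon(\lambda) = \Lambda_{\varepsilon/(3\alpha), \varepsilon/3, \alpha}(\lambda/2) + 1$, with $n := \alpha^P = \Phi_{\varepsilon,\Lambda}(\lambda)$. For any $m \ge n$, pick $p$ with $\alpha^p \le m < \alpha^{p+1}$; since $m \ge \alpha^P$ we get $p \ge P > \Lambda_{\varepsilon/(3\alpha), \varepsilon/3, \alpha}(\lambda/2)$. The bad event $\{|S_m/m - z_m/m| > \varepsilon\}$ is contained in $\bigcup_s (\{|S_{k_s^-(p)}/k_s^-(p) - z_{k_s^-(p)}/k_s^-(p)| > \varepsilon/(3\alpha)\} \cup \{|S_{k_s^+(p)}/k_s^+(p) - z_{k_s^+(p)}/k_s^+(p)| > \varepsilon/(3\alpha)\})$, but actually the clean way is: the event $\{\sup_{m \ge n}|S_m/m - z_m/m| > \varepsilon\}$ is contained in $\bigcup_{p \ge P}\bigcup_{s=0}^{L_\delta}\bigcup_{\sigma \in \{+,-\}} \{|S_{k_s^\sigma(p)}/k_s^\sigma(p) - z_{k_s^\sigma(p)}/k_s^\sigma(p)| > \varepsilon/(3\alpha)\}$. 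By monotonicity of measure and then the union bound over $s$ together with the defining property of the rate $\Lambda$ (applied with $\lambda/2$ so that the two $\pm$ tails each contribute at most $\lambda/2$ after summing the Borel--Cantelli tail probabilities), one bounds $\PP$ of this union by $\lambda$.

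The main subtlety — and the step I expect to be the real obstacle — is handling the dependence on $s$ correctly. For a fixed $m$ (hence fixed $p$), only \emph{one} value of $s$ is relevant, the one with $z_m/m \in [s\delta,(s+1)\delta)$; but when we take the supremum over all $m \ge n$, different $m$'s in the same dyadic block $[\alpha^p, \alpha^{p+1})$ may use different $s$'s, so we must union over all $0 \le s \le L_\delta$. The rate $\Lambda_{\varepsilon,\delta,\alpha}$ is assumed independent of $s$ precisely so that $\sum_{n > p}\PP(\cdots) \le \lambda$ holds uniformly in $s$; but naively unioning over the $L_\delta + 1 = \floor{W/\delta} + 1$ values of $s$ would introduce a factor of $L_\delta + 1$ into the bound, which does \emph{not} appear in the claimed rate $\Phi_{\varepsilon,\Lambda}$. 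The resolution must be that we do not sum the tail bounds over $s$: for each fixed $m$, there is a unique relevant $s = s(m)$, and the tail-sum bound $\sum_{n>p}\PP(|S_{k_{s(m)}^\pm(n)}/\cdots| > \varepsilon/(3\alpha)) \le \lambda/2$ already dominates the contribution of \emph{all} $m' \ge n$ in blocks $p' \ge p$ that happen to use that same $s(m)$, because those are exactly the terms $n > p' \ge P$ in that very series. Carefully: $\{\sup_{m\ge n}|\cdots| > \varepsilon\} \subseteq \bigcup_{s=0}^{L_\delta}\bigcup_{\sigma}\bigcup_{p' \ge P}\{|S_{k_s^\sigma(p')}/k_s^\sigma(p') - z_{k_s^\sigma(p')}/k_s^\sigma(p')| > \varepsilon/(3\alpha)\}$, and for each fixed $s,\sigma$ the innermost union over $p' \ge P$ has probability at most $\sum_{p' > \Lambda}\PP(\cdots) \le \lambda/2$ — but we still union over $s$. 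I would need to re-examine whether the intended argument instead fixes, for the purpose of the supremum, that the ``worst'' $m$ determines a single $s$; if the factor $L_\delta+1$ genuinely is needed, then either the statement absorbs it into $\Lambda$ or I have mislocated where the $\pm$/$s$ quantifiers sit. My plan would be to write the set inclusion with the $s$-union outermost, bound each $(s,\sigma)$-indexed piece using the $s$-uniform rate $\Lambda$ applied at argument $\lambda/2$, and then track precisely where the $L_\delta$-fold union is avoided — most plausibly by noting that the events for distinct $s$ at a fixed $m$ are not all needed simultaneously, only the one matching $z_m/m$, so one reorganizes as a union over $m$ (equivalently over $p$) of \emph{single}-$s$ events, and the tail bound over $p$ for that threading of $s$'s is still majorized by $\sum_{n > P}(\text{plus tail}) + \sum_{n>P}(\text{minus tail}) \le \lambda/2 + \lambda/2 = \lambda$.

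Finally, the remaining routine steps are: verify $\alpha > 1$ (immediate since $\varepsilon, W > 0$), verify that the choices $\delta = \varepsilon/3$ and $\alpha = 1 + \varepsilon/(3W)$ indeed give $(\alpha-1)W + \delta = 2\varepsilon/3$ so that the deduction $|S_m/m - z_m/m| \le \alpha \cdot \tfrac{\varepsilon}{3\alpha} + \tfrac{2\varepsilon}{3} = \varepsilon$ goes through, confirm $\Phi_{\varepsilon,\Lambda}(\lambda) = \alpha^{\Pi_\varepsilon(\lambda)}$ is of the right form and is monotone/well-defined as a rate, and check the edge behavior when some $C_{\alpha,s,\delta,p}$ is empty (then $k_s^\pm(p) = \floor{\alpha^p}$ by convention and that $s$ simply contributes no genuine constraint for that $p$). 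None of these should pose difficulty once the $s$-bookkeeping is pinned down.
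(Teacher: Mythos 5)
Your reconstruction of the argument --- the choices $\delta=\varepsilon/3$ and $\alpha=1+\varepsilon/(3W)$, the tail estimate $\PP\left(\sup_{q\ge p}|\cdot|>\varepsilon/(3\alpha)\right)\le\lambda/2$ for $p\ge\Lambda_{\frac{\varepsilon}{3\alpha},\frac{\varepsilon}{3},\alpha}(\lambda/2)+1$, and the use of the chain of inequalities from the proof of Theorem \ref{thrm:gen} --- is exactly the paper's, and you have correctly located the crux: how to pass from the supremum over $m\ge n$ to the subsequence events without paying a factor $L_\delta+1$. But at that crux your proposal has a genuine gap. The paper's proof does \emph{not} take a union over $s$: it claims that there is a \emph{single} $r\in\{0,\dots,L_\delta\}$ such that the event $\{\sup_{m\ge n}|S_m/m-z_m/m|>\varepsilon\}$ is contained in the union of just the two events $\{\sup_{q\ge p}|S_{k_r^{\pm}(q)}/k_r^{\pm}(q)-z_{k_r^{\pm}(q)}/k_r^{\pm}(q)|>\varepsilon/(3\alpha)\}$, and it proves this by contradiction: assuming that for every $r$ the supremum exceeds $\varepsilon$ while all $k_r^{\pm}(q)$-deviations are $\le\varepsilon/(3\alpha)$ for $q\ge p$, it picks a witness $m\ge n$, lets $r$ be the bin containing $z_m/m$ and $q$ its block, and runs (\ref{eqn:complicatedmaths}) to force $|S_m/m-z_m/m|\le\varepsilon$, a contradiction. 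Only two tail probabilities are then added, giving $\lambda/2+\lambda/2=\lambda$ and hence the stated rate with no $L_\delta$-dependence.

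Your proposed repair (``threading'' a single $s=s(m)$ through the union over $m$) does not close the gap: within one block $[\alpha^q,\alpha^{q+1})$ the deterministic ratios $z_m/m$ can fall into several different bins, so your reorganised union still contains, for a fixed block, events indexed by up to $L_\delta+1$ distinct values of $s$, and it is not majorised by one plus-tail and one minus-tail of the assumed series. As written, your argument therefore only yields the weaker conclusion in which $\Lambda$ is evaluated at $\lambda/(2(L_\delta+1))$ rather than at $\lambda/2$. I would add that your unease points at a real subtlety of this very step: read pointwise, the contradiction argument produces an $r$ that depends on the sample point (through the witnessing $m$), which is precisely the union over $r$ you wrote down, whereas the stated rate rests on the stronger assertion that one $r$ serves for the whole event. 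So to match the paper you must make that single-$r$ inclusion (and its negation) completely explicit and convince yourself of it; otherwise you should state the rate with the $\lambda/(2(L_\delta+1))$ call to $\Lambda$.
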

\begin{proof}
  First we observe that, for all $\delta,\lambda,\varepsilon>0$, $\alpha > 1$, natural numbers $0 \le s \le L_\delta$ and $p \ge \Lambda_{\varepsilon, \delta,\alpha}(\lambda) + 1 $:
        \begin{equation*}
        \begin{aligned}
         \PP\left(\sup_{q \ge p}\left|\frac{S_{k_s^\pm(q)}}{k_s^\pm(q)} - \frac{z_{k_s^\pm(q)}}{k_s^\pm(q)}\right|>\varepsilon\right)
        &=\PP\left(\bigcup_{q = p}^{\infty} \left(\left|\frac{S_{k_s^\pm(q)}}{k_s^\pm(q)} - \frac{z_{k_s^\pm(q)}}{k_s^\pm(q)}\right| > \varepsilon\right)\right) \\
      &\le \sum_{q=p}^{\infty} \PP\left(\left|\frac{S_{k_s^\pm(q)}}{k_s^\pm(q)} - \frac{z_{k_s^\pm(q)}}{k_s^\pm(q)}\right| > \varepsilon\right)\le \lambda.
    \end{aligned}
    \end{equation*}
    Here, the last inequality follows from the fact that $p - 1 \ge \Lambda_{\varepsilon, \delta,\alpha}(\lambda) $ (and that $\Lambda$ is a rate of convergence).
    
    Now, fix  $\varepsilon, \lambda> 0$ and 
    $$ n \ge \Phi_{\varepsilon,\Lambda}(\lambda) = \alpha^{\Lambda_{\frac{\varepsilon}{3\alpha},\frac{\varepsilon}{3},\alpha} (\frac{\lambda}{2}) + 1}.$$
    We must show,
    $$\PP\left(\sup_{m \ge n}\left|\frac{S_m}{m} -  \frac{z_m}{m}\right|>\varepsilon\right) \le \lambda.$$
     Set $\delta = \frac{\varepsilon}{3}$ and observe that having $\alpha = 1 + \frac{\varepsilon}{3W}$ ensures that 
    \begin{equation}
        \label{eqn:choicealpha}
            -\frac{\varepsilon}{3} \le - (1-\frac{1}{\alpha})W\text{ and }(\alpha - 1)W = \frac{\varepsilon}{3}.
    \end{equation}
Take $p \in \NN$ such that $\alpha^p \le n < \alpha^{p+1}$. Then we have
$$ \alpha^{\Lambda_{\frac{\varepsilon}{3\alpha},\frac{\varepsilon}{3},\alpha} (\frac{\lambda}{2}) + 1} \le n < \alpha^{p+1}$$
which implies
$$p \ge \Lambda_{\frac{\varepsilon}{3\alpha},\frac{\varepsilon}{3},\alpha} \left(\frac{\lambda}{2}\right) + 1.$$
Thus, by the very first step of the proof, we have, for each $0\le r\le L_\delta$,
\begin{equation}
\label{eqn:pstep}
    \PP\left(\sup_{q \ge p}\left|\frac{1}{k_r^\pm(q)}S_{k_r^\pm(q)} - \frac{1}{k_r^\pm(q)}z_{k_r^\pm(q)}\right| > \frac{\varepsilon}{3\alpha}\right)\le \frac{\lambda}{2}.
\end{equation}
Thus, it suffices to show that there exists $0\le r\le L_\delta$ such that  
$$\sup_{m \ge n}\left|\frac{S_m}{m} -  \frac{z_m}{m}\right|>\varepsilon$$
implies that
$$\sup_{q \ge p}\left|\frac{1}{k_r^-(q)}S_{k_r^-(q)} - \frac{1}{k_r^-(q)}z_{k_r^-(q)}\right| > \frac{\varepsilon}{3\alpha}$$
or that
$$\sup_{q \ge p}\left|\frac{1}{k_r^+(q)}S_{k_r^+(q)} - \frac{1}{k_r^+(q)}z_{k_r^+(q)}\right| > \frac{\varepsilon}{3\alpha},$$
as then we would have, for such an $r$,

\begin{equation*}
    \begin{aligned}
        \PP\left(\sup_{m \ge n}\left|\frac{S_m}{m} -  \frac{z_m}{m}\right|>\varepsilon\right) 
        &\le \PP\left(\sup_{q \ge p}\left|\frac{1}{k_r^-(q)}S_{k_r^-(q)} - \frac{1}{k_r^-(q)}z_{k_r^-(q)}\right| > \frac{\varepsilon}{3\alpha}\right)\\ 
        &+ \PP\left(\sup_{q \ge p}\left|\frac{1}{k_r^+(q)}S_{k_r^+(q)} - \frac{1}{k_r^+(q)}z_{k_r^+(q)}\right| > \frac{\varepsilon}{3\alpha}\right) \le \lambda,
    \end{aligned}
\end{equation*}
which is what we are required to show (with the final inequality following from (\ref{eqn:pstep})).

Suppose, for contradiction, that the above was not the case. Then, for all $0\le r\le L_\delta$, we have
$$\sup_{m \ge n}\left|\frac{1}{m}S_m -  \frac{1}{m}z_m\right|>\varepsilon$$
and
\begin{equation}
\label{eqn:contra}
     \left|\frac{1}{k_r^\pm(q)}S_{k_r^\pm(q)} - \frac{1}{k_r^\pm(q)}z_{k_r^\pm(q)}\right| \le \frac{\varepsilon}{3\alpha}
\end{equation}
for all $q \ge p$. Take $m \ge n$ such that 
\begin{equation}
    \label{eqn:mstep}
    \left|\frac{1}{m}S_m -  \frac{1}{m}z_m\right|>\varepsilon.
\end{equation}
We now use arguments similar to the proof of Theorem \ref{thrm:gen}. We can find $0\le r\le L_\delta$ such that 
$$\frac{1}{m}z_m \in [r\delta,(r+1)\delta],$$
 so, taking $q \in \NN$ such that $\alpha^q \le m < \alpha^{q+1}$, ensures that $m \in C_{\alpha,r,\delta,q}$. Furthermore, as $m \ge n$, we have $q \ge p$.

Now, since  $k_r^\pm(q) \in C_{\alpha,r,\delta,q}$, we have 
$$\frac{1}{k_r^\pm(q)}z_{k_r^\pm(q)} \in [r\delta,(r+1)\delta)$$
which implies
$$\left|\frac{1}{m}z_m - \frac{1}{k_r^\pm(q)}z_{k_r^\pm(q)}\right| \le \delta.$$
 Now, following the exact same reasoning as (\ref{eqn:complicatedmaths}), we have
\begin{equation*}
\begin{aligned}
       &-\delta -\left(1-\frac{1}{\alpha}\right)W +\frac{1}{\alpha}\frac{1}{k_r^-(q)}\left(S_{k_r^-(q)} - z_{k_r^-(q)}\right) \\
       &\qquad\qquad\le \frac{1}{m}(S_m - z_m) \\
       &\qquad\qquad\le \frac{\alpha}{k_r^+(q)}\left(S_{k_r^+(q)} - z_{k_r^+(q)}\right) + (\alpha - 1)W + \delta.
\end{aligned}
\end{equation*}
So, (\ref{eqn:choicealpha}) implies that (recalling that $\delta = \varepsilon/3)$,
\begin{equation}
\label{eqn:simpcomp}
\begin{aligned}
     -\frac{2\varepsilon}{3} +\frac{1}{\alpha}\frac{1}{k_r^-(q)}\left(S_{k_r^-(q)} - z_{k_r^-(q)}\right)&\le\frac{1}{m}(S_m - z_m)\\
     &\le\frac{\alpha}{k_r^+(q)}\left(S_{k_r^+(q)} - z_{k_r^+(q)}\right) + \frac{2\varepsilon}{3}.
\end{aligned}
\end{equation}
Now, the above and (\ref{eqn:contra}) (and the fact that $\alpha>1$) implies $\left|1/m(S_m - z_m)\right|\le\varepsilon$, which contradicts (\ref{eqn:mstep}).
\end{proof}
\begin{remark}
Applying Theorem \ref{thrm:genquant} to obtain explicit rates of convergences for results related to the Strong Laws of Large Numbers requires that we must find an explicit rate of convergence for the (\ref{eqn:sumII}). It is a well known result in computability theory that the Monotone Convergence Theorem is computationally ineffective, in the sense that a computable bound on a series does not necessarily yield a computable rate of convergence (a monotone bounded sequence can converge arbitrarily slowly, see \cite{specker:49:sequence}). Since this step still features in the proof of Theorem \ref{thrm:gen}, we require a rate of convergence for (\ref{eqn:sumII}) in Theorem \ref{thrm:genquant} to provide a construction for a rate of convergence for the conclusion. Furthermore, when the strategy captured by Theorem \ref{thrm:gen} is applied to obtain results related to the Strong Law of Large Numbers, one typically just bounds (\ref{eqn:sumII}), thus more work will be required than what is offered by the proof of these results as we will have to explicitly calculate rates of convergences. This will be the case when we apply Theorem \ref{thrm:genquant} to prove Theorem \ref{thrm:quantchen} in Section \ref{sec:chen-sung}.  
\end{remark}
\section{Application I: Pairwise Independent with Finite Variance}
\label{sec:finvar}
In this section, we shall prove Theorem \ref{thrm:bound}. First, we calculate a rate under the assumption that the random variables are nonnegative.\\

Fix a sequence of nonnegative, pairwise independent random variables $\seq{Y_n}$ with, $\EE(Y_n) \le \mu \neq 0$, $\mathrm{Var}(Y_n)\le  \sigma_Y^2$ for all $n \in \NN$ and some $\mu, \sigma_Y > 0$. Set $S^Y_n:= \sum_{i=1}^nY_i$ and $z^Y_n := \sum_{i = 1}^n\EE(Y_n)$.
\begin{lemma}
     For all $\varepsilon, \delta> 0$, $\alpha > 1$ and $0\le s \le L_\delta$,
     $$ R_{\varepsilon,\alpha} (\lambda) = \log_\alpha\left(\frac{2\sigma_Y^2}{\lambda\varepsilon^2(\alpha-1)}\right) - 1$$
     is a rate of convergence for the partial sums of
\begin{equation}
    \sum_{n=1}^\infty \PP\left(\left|\frac{S^Y_{k_s^\pm(n)}}{k_s^\pm(n)} - \frac{z^Y_{k_s^\pm(n)}}{k_s^\pm(n)}\right| > \varepsilon\right) 
\end{equation}
to their respective limits.
\end{lemma}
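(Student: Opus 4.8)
The goal is to produce an explicit rate of convergence for the partial sums of $\sum_n \PP(|S^Y_{k_s^\pm(n)}/k_s^\pm(n) - z^Y_{k_s^\pm(n)}/k_s^\pm(n)| > \varepsilon)$. The plan is to bound each summand by Chebyshev's inequality and then control the resulting series by a geometric series, since the indices $k_s^\pm(n)$ grow at least like $\alpha^n$. Concretely, for pairwise independent random variables the variance is additive, so $\mathrm{Var}(S^Y_m) = \sum_{i=1}^m \mathrm{Var}(Y_i) \le m\sigma_Y^2$, and since $\EE(S^Y_m) = z^Y_m$, Chebyshev gives
\begin{equation*}
\PP\left(\left|\frac{S^Y_m}{m} - \frac{z^Y_m}{m}\right| > \varepsilon\right) \le \frac{\mathrm{Var}(S^Y_m)}{m^2\varepsilon^2} \le \frac{\sigma_Y^2}{m\varepsilon^2}.
\end{equation*}

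Next I would substitute $m = k_s^\pm(n)$. By the definitions preceding Theorem \ref{thrm:gen}, whether or not $C_{\alpha,s,\delta,n}$ is empty we have $k_s^\pm(n) \ge \floor{\alpha^n} \ge \alpha^n/2$ (using $\alpha^n \ge 1$, so $\floor{\alpha^n} \ge \alpha^n - 1 \ge \alpha^n/2$; for $n=0$ one checks $k_s^\pm(0)\ge 1 = \alpha^0$ directly, so in fact the cleaner bound $k_s^\pm(n)\ge \alpha^n/2$ — or even $\ge\alpha^n$ if one is careful — holds). This yields $\PP(\cdots) \le \frac{2\sigma_Y^2}{\alpha^n\varepsilon^2}$ for each summand. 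Summing the tail from $n = p+1$ onwards gives a geometric series:
\begin{equation*}
\sum_{n=p+1}^\infty \frac{2\sigma_Y^2}{\alpha^n\varepsilon^2} = \frac{2\sigma_Y^2}{\varepsilon^2}\cdot\frac{\alpha^{-(p+1)}}{1 - \alpha^{-1}} = \frac{2\sigma_Y^2}{\varepsilon^2\alpha^{p+1}}\cdot\frac{\alpha}{\alpha-1} = \frac{2\sigma_Y^2}{\varepsilon^2\alpha^{p}(\alpha-1)}.
\end{equation*}
Requiring this to be $\le \lambda$ and solving for $p$ gives $\alpha^p \ge \frac{2\sigma_Y^2}{\lambda\varepsilon^2(\alpha-1)}$, i.e. $p \ge \log_\alpha\!\left(\frac{2\sigma_Y^2}{\lambda\varepsilon^2(\alpha-1)}\right)$. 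One then checks that $R_{\varepsilon,\alpha}(\lambda) = \log_\alpha\!\left(\frac{2\sigma_Y^2}{\lambda\varepsilon^2(\alpha-1)}\right) - 1$ works with the paper's indexing convention (where the rate $\Lambda$ is used as $p \ge \Lambda(\lambda) + 1$, matching the "$-1$" in the displayed formula and the extra "$+1$" appearing in $\Pi_\varepsilon$ in Theorem \ref{thrm:genquant}). Since the bound is independent of $s$ and identical for the $+$ and $-$ versions, the same $R_{\varepsilon,\alpha}$ serves for both sums.

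The only genuinely delicate point is the lower bound on $k_s^\pm(n)$ and the bookkeeping with the off-by-one in the indexing convention; the probabilistic content (pairwise independence $\Rightarrow$ additivity of variance $\Rightarrow$ Chebyshev) and the geometric summation are routine. I would therefore expect to spend most of the write-up making sure the floor estimate $k_s^\pm(n) \ge \alpha^n/2$ and the alignment of $R_{\varepsilon,\alpha}$ with the convention "$p \ge \Lambda_{\varepsilon,\delta,\alpha}(\lambda)$ implies tail $\le \lambda$" are stated correctly, so that the constant $2$ and the shift by $1$ in the formula are exactly right.
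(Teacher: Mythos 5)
Your proposal follows essentially the same route as the paper's own proof: Chebyshev's inequality, additivity of variance via pairwise independence, the lower bound $k_s^\pm(n)\ge\floor{\alpha^n}>\alpha^n/2$, and a geometric tail sum starting at $n=Q+1$. Your exact evaluation of the geometric series (which yields the threshold $\log_\alpha\left(\frac{2\sigma_Y^2}{\lambda\varepsilon^2(\alpha-1)}\right)$, so that the ``$-1$'' in $R_{\varepsilon,\alpha}$ is justified only because the rate is later invoked with an extra $+1$ in Theorem \ref{thrm:genquant}) is, if anything, slightly more careful than the paper's displayed estimate, which silently drops a factor of $\alpha$ at that step.
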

\begin{proof}
   Fix $\lambda,\varepsilon, \delta> 0$ and $\alpha > 1$ as well as $0\le s \le L_\delta$, $Q \ge R_{\varepsilon,\alpha,} (\lambda)$. Then:
    \begin{equation*}
    \begin{aligned}
     \sum_{n=Q+1}^{\infty} \PP\left(\left|\frac{1}{k_s^\pm(n)}S^Y_{k^\pm(n)} - \frac{1}{k_s^\pm(n)}z^Y_{k_s^\pm(n)}\right| > \varepsilon\right) &\le \frac{1}{\varepsilon^2}\sum_{n =Q+1}^{\infty} \frac{\mathrm{Var}(S^Y_{k^\pm(n)})}{k_s^\pm(n)^2} \\
     &\le \frac{ \sigma_Y^2 }{\varepsilon^2}\sum_{n =Q+1}^{\infty}\frac{1}{k_s^\pm(n)}\\
     &\le \frac{ 2\sigma_Y^2 }{\varepsilon^2}\sum_{n =Q+1}^{\infty}\alpha^{-n}\\
     &\le \frac{ 2\sigma_Y^2\alpha^{-(Q+1)} }{\varepsilon^2(\alpha -1)} \le \lambda.
    \end{aligned}
    \end{equation*}
We get the first inequality from Chebyshev's inequality, the second inequality by pairwise independence, the third inequality by using $k^\pm(n) \ge \floor{\alpha^n} > \alpha^n/2$, the fourth inequality by using the sum of an infinite geometric sequence and the last inequality from the assumption that $Q \ge R_{\varepsilon,\alpha} (\lambda)$.
\end{proof}
We can now apply Theorem \ref{thrm:genquant} with the rate we obtained above, observing that $R$ is independent of $s$ (and $\delta$), to easily obtain the following:
\begin{lemma}
    For all $\varepsilon, \lambda> 0$ and all
    $$ n \ge \Delta_{\varepsilon,\mu,\sigma_Y}(\lambda) := \Phi_{\varepsilon,R} (\lambda) := \frac{36\alpha^2\sigma_Y^2}{\lambda\varepsilon^2(\alpha-1)},$$
    it holds that
        \begin{equation*}
         \PP\left(\sup_{m \ge n}\left|\frac{S^Y_m}{m} - \frac{z_{k_s^\pm(n)}}{k_s^\pm(n)}\right|>\varepsilon\right) \le \lambda.
    \end{equation*}
    Here, $\alpha := 1 + \frac{\varepsilon}{3\mu}$, $R$ is defined as in the previous lemma and $\Phi$ is defined as in Theorem \ref{thrm:genquant}.
\end{lemma}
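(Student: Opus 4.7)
The plan is to apply Theorem \ref{thrm:genquant} directly, feeding in the rate $R$ supplied by the preceding lemma as the input $\Lambda$. First, I would verify the hypotheses: the random variables $\seq{Y_n}$ are nonnegative and since $\EE(Y_n) \le \mu$ we have $\frac{z^Y_n}{n} \le \mu$, so we may take $W := \mu$; this is precisely the choice underlying the formula $\alpha = 1 + \frac{\varepsilon}{3\mu}$ stated in the lemma. Since the previous lemma provides a rate $R_{\varepsilon,\alpha}(\lambda)$ that does not depend on $s$ or $\delta$, it fulfils the uniformity demand built into the premise of Theorem \ref{thrm:genquant}.

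It then remains to unfold $\Phi_{\varepsilon, R}(\lambda) = \alpha^{R_{\varepsilon/(3\alpha),\alpha}(\lambda/2) + 1}$. Substituting the formula for $R$ gives
\[
\Phi_{\varepsilon, R}(\lambda) = \alpha \cdot \frac{2\sigma_Y^2}{(\lambda/2)\bigl(\varepsilon/(3\alpha)\bigr)^2(\alpha - 1)} = \frac{36\alpha^2 \sigma_Y^2}{\lambda \varepsilon^2 (\alpha - 1)},
\]
which matches the claimed $\Delta_{\varepsilon,\mu,\sigma_Y}(\lambda)$ exactly. The desired probabilistic bound is then immediate from the conclusion of Theorem \ref{thrm:genquant}.

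Because the conceptual and computational work has already been done in Theorem \ref{thrm:genquant} and in the preceding lemma, there is no genuine obstacle here; this step is essentially bookkeeping. The only thing to monitor carefully is the tracking of constants: the substitution $\varepsilon \mapsto \varepsilon/(3\alpha)$ inside $R$ contributes a factor of $9\alpha^2$, and $\lambda \mapsto \lambda/2$ contributes an extra factor of $2$, which together with the $2\sigma_Y^2$ native to $R$ produce the final constant $36\alpha^2$. The $+1$ in the exponent of $\Phi_{\varepsilon, \Lambda}$ cancels the $-1$ appearing in $R_{\varepsilon,\alpha}$, which is why the final bound is a clean rational expression rather than carrying an extra factor of $\alpha$.
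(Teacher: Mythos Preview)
Your approach is exactly the paper's: apply Theorem \ref{thrm:genquant} with $W=\mu$ and $\Lambda=R$, then unfold the definition of $\Phi$. One small slip: in your displayed computation the leading factor $\alpha$ should not be there (the $+1$ and $-1$ already cancel, as you yourself note), and indeed without it the expression evaluates to $36\alpha^2\sigma_Y^2/(\lambda\varepsilon^2(\alpha-1))$ rather than $36\alpha^3\sigma_Y^2/(\lambda\varepsilon^2(\alpha-1))$.
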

\begin{proof}
    This follows immediately from Theorem \ref{thrm:genquant}. Note we may take $W$ to be $\mu$.
\end{proof}
We can now obtain a rate where the random variables are not assumed to be nonnegative.
\begin{proposition}
\label{thrm:betterrate}
Let $\seq{X_n}$ be a sequence of pairwise independent random variables with $\EE(X_n) = 0$, $\mathrm{Var}(X_n) \le \sigma^2$ and $\EE(|X_n|) \le \tau$ for all $n \in \NN$ and some $\tau, \sigma > 0$. Furthermore, let $S_n := \sum_{i=1}^nX_i$. Then for all $\varepsilon, \lambda> 0$ and all $n \ge \Delta_{\frac{\varepsilon}{2},\frac{\tau}{2},\sigma}(\frac{\lambda}{2}) $:
        \begin{equation*}
         P^*_{n,\varepsilon} = \PP\left(\sup_{m \ge n}\left|\frac{S_m}{m}\right|>\varepsilon\right) \le \lambda.
    \end{equation*}
    Here,$$\Delta_{\frac{\varepsilon}{2},\frac{\tau}{2},\sigma}\left(\frac{\lambda}{2}\right) := \frac{288\alpha^2\sigma^2}{\lambda\varepsilon^2(\alpha-1)}$$
    as before with $\alpha := 1 + \frac{\varepsilon}{3\tau}$. Thus, $P^*_{n,\varepsilon}$ converges to 0 with a rate of convergence given above.
\end{proposition}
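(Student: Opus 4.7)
The plan is to reduce the general case to the nonnegative case already handled by the previous lemma, using the standard decomposition $X_n = X_n^+ - X_n^-$ where $X_n^+ = \max\{X_n, 0\}$ and $X_n^- = \max\{-X_n,0\}$. First I would verify that both $\{X_n^+\}$ and $\{X_n^-\}$ satisfy the hypotheses of the lemma giving the rate $\Delta$: they are nonnegative; they are pairwise independent since each $X_n^{\pm}$ is a Borel function of $X_n$; their second moments (hence variances) are bounded by $\EE(X_n^2) = \mathrm{Var}(X_n) \le \sigma^2$; and since $\EE(X_n) = 0$ one has $\EE(X_n^+) = \EE(X_n^-) = \EE(|X_n|)/2 \le \tau/2$.

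Next I would set $S_n^{\pm} := \sum_{i=1}^n X_i^{\pm}$ and $z_n^{\pm} := \sum_{i=1}^n \EE(X_i^{\pm})$. The crucial observation is that $z_n^+ = z_n^-$ (again by the mean-zero hypothesis), so
\[
\frac{S_m}{m} \;=\; \left(\frac{S_m^+}{m} - \frac{z_m^+}{m}\right) - \left(\frac{S_m^-}{m} - \frac{z_m^-}{m}\right),
\]
and the triangle inequality together with the observation that if $|a - b| > \varepsilon$ then $|a| > \varepsilon/2$ or $|b| > \varepsilon/2$ yields the event inclusion
\[
\left\{\sup_{m \ge n}\left|\tfrac{S_m}{m}\right| > \varepsilon\right\} \subseteq \left\{\sup_{m \ge n}\left|\tfrac{S_m^+}{m} - \tfrac{z_m^+}{m}\right| > \tfrac{\varepsilon}{2}\right\} \cup \left\{\sup_{m \ge n}\left|\tfrac{S_m^-}{m} - \tfrac{z_m^-}{m}\right| > \tfrac{\varepsilon}{2}\right\}.
\]

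I then apply the union bound and invoke the previous lemma for each of $\{X_n^+\}$ and $\{X_n^-\}$, with parameters $\mu = \tau/2$, $\sigma_Y = \sigma$, level $\varepsilon/2$ and tolerance $\lambda/2$. This shows that for $n \ge \Delta_{\varepsilon/2,\tau/2,\sigma}(\lambda/2)$ each summand is at most $\lambda/2$, so $P^*_{n,\varepsilon} \le \lambda$. A short computation confirms that the choice $\alpha = 1 + (\varepsilon/2)/(3\tau/2) = 1 + \varepsilon/(3\tau)$ and the formula for $\Delta$ from the previous lemma yield precisely the stated bound $288\alpha^2\sigma^2/(\lambda\varepsilon^2(\alpha-1))$.

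There is no real obstacle here; the argument is a routine symmetrization. The only minor point requiring care is checking that pairwise independence, bounded variance, and the required bound on $\EE(X_n^{\pm})$ all transfer cleanly to the positive and negative parts, together with the algebraic simplification $z_n^+ = z_n^-$ that makes the centering terms cancel so that $|S_m/m|$ can be bounded by the centered fluctuations of the two nonnegative sequences alone.
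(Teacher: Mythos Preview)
Your proposal is correct and follows essentially the same route as the paper: decompose $X_n = X_n^+ - X_n^-$, check that $\mathrm{Var}(X_n^\pm)\le\sigma^2$ and $\EE(X_n^\pm)=\EE(|X_n|)/2\le\tau/2$, use $z_n^+=z_n^-$ so that $S_m/m$ splits as the difference of the two centered nonnegative averages, and then apply the union bound together with the previous lemma at level $\varepsilon/2$ and tolerance $\lambda/2$. If anything, you are slightly more explicit than the paper in justifying that pairwise independence passes to the positive and negative parts.
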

\begin{proof}
    We have, for all $n \in \NN$, $\sigma^2 \ge  \mathrm{Var}(X_n) = \EE(X_n^2) \ge \mathrm{Var}(X_n^+) + \mathrm{Var}(X_n^-)$ which implies $\sigma^2 \ge \mathrm{Var}(X_n^\pm)$. Furthermore, we have $\EE(X_n^\pm) \le \frac{\tau}{2}$ since $\EE(X_n) = 0 =\EE(X_n^+)-\EE(X_n^-)$. Thus, if we take $\seq{Y_n} = \seq{X_n^\pm}$, we can set $\sigma_Y:= \sigma$ and $\mu:= \tau/2$. Furthermore we can set
    \begin{equation*}
        z_n := \sum_{i=1}^n \EE(X_n^+) = \sum_{i=1}^n \EE(X_n^-). 
    \end{equation*}
    Thus, from the previous lemma:
        \begin{equation*}
         \PP\left(\sup_{m \ge n}\left|\frac{1}{m}S^\pm_m - \frac{1}{m}z_m\right|>\frac{\varepsilon}{2}\right) \le \frac{\lambda}{2}
    \end{equation*}
    for all $\varepsilon, \lambda> 0$ and $n \ge \Delta_{\frac{\varepsilon}{2},\mu,\sigma_Y}(\frac{\lambda}{2})$. Thus, if $ n \ge \Delta_{\frac{\varepsilon}{2},\frac{\tau}{2},\sigma}(\frac{\lambda}{2}) = \Delta_{\frac{\varepsilon}{2},\mu,\sigma_Y}(\frac{\lambda}{2})$, then
    \begin{equation*}
    \begin{aligned}
    \PP\left(\sup_{m \ge n} \left|\frac{1}{m}S_{m}\right| > \varepsilon\right) &= \PP\left(\sup_{m \ge n} \left|\left(\frac{1}{m}S_{m}^+ - \frac{1}{m}z_m\right) - \left(\frac{1}{m}S_{m}^- - \frac{1}{m}z_m\right)\right| > \varepsilon\right)  \\
    &\le \PP\left(\sup_{m \ge n} \left|\frac{1}{m}S_{m}^+ - \frac{1}{m}z_m\right| > \frac{\varepsilon}{2}\right) + \PP\left(\sup_{m \ge n} \left|\frac{1}{m}S_{m}^- - \frac{1}{m}z_m\right| > \frac{\varepsilon}{2}\right) \\
    &\le \lambda.
    \end{aligned}
\end{equation*}
Here $S_n^\pm := \sum_{i=1}^n X_n^\pm$.
\end{proof}
This, in particular, allows us rather immediately to deduce Theorem \ref{thrm:bound}:
\begin{proof}[Proof of Theorem \ref{thrm:bound}]
Using the above proposition, we have
\begin{equation*}
   P^*_{n,\varepsilon} \le  \frac{288\alpha^2\sigma^2}{n\varepsilon^2(\alpha-1)}
\end{equation*}
for all $n \in \NN$. So Theorem \ref{thrm:bound} follows by noting that if $\varepsilon \le \tau$, we will have $\alpha \le 4/3$.
\end{proof}
\begin{remark}
    In the case $\varepsilon > \tau$, observe that $\alpha < 4\varepsilon/3\tau$ and so we can deduce
    \begin{equation*}
      P^*_{n,\varepsilon} \le  \frac{1536\sigma^2}{n\varepsilon\tau}.
    \end{equation*}
\end{remark}

We shall now discuss the optimality of the bound we obtained.

\begin{example}
     For $\delta > 0$, let $\seq{X_n}$ be a sequence of integer-valued iid random variables such that
     \begin{equation*}
         \PP(X_1 = n) = \frac{c}{n^{3+\delta}} \\\ \mbox{for }\\ c = \left(\sum_{n = 1}^\infty \frac{1}{n^{3+\delta}}\right)^{-1}
     \end{equation*}
     for all $n \in \ZZ^+$ (and probability $0$ for all other integers).
      Then $\mathrm{Var}(X_1) < \infty$ and for all $1 \ge \varepsilon > 0$ and any $n \in \NN$:
     \begin{equation*}
    \PP\left(\sup_{m \ge n} \left|\frac{S_{m}}{m} - \mu\right| > \varepsilon\right) \ge \frac{\omega}{n^{1+\delta}}
     \end{equation*}
      where $\mu$ is the mean of $X_1$, given by 
     \begin{equation*}
         \mu=\sum_{n = 1}^\infty \frac{c}{n^{2+\delta}},
     \end{equation*}
     and
     \[
     \omega= \frac{c}{2\times3^{2+\delta}(2+\delta)}.
     \]
\end{example}
\begin{proof}
These random variables clearly have finite variance. For any $1 \ge \varepsilon > 0$, we have
\begin{equation*}
    \PP\left(\sup_{m \ge n} \left|\frac{S_{m}}{m} - \mu\right| > \varepsilon\right) \ge \PP\left(\sup_{m \ge n} \left|\frac{S_{m}}{m} - \mu\right| \ge 1 \right). 
\end{equation*}
Observe that
$$\mu = \frac{\sum_{n = 1}^\infty \frac{1}{n^{2+\delta}}}{\sum_{n = 1}^\infty \frac{1}{n^{3+\delta}}} < \frac{\zeta(2)}{\zeta(4)} = \frac{15}{\pi^2} < 2. $$
Thus, we get
\begin{equation*}
\begin{aligned}
    \PP\left(\sup_{m \ge n} \left|\frac{S_{m}}{m} - \mu\right| \ge 1 \right) &\ge \PP\left(\sup_{m \ge n} \frac{S_{m}}{m} \ge 3\right) \\
    &\ge \PP\left(\frac{S_{n}}{n} \ge 3\right)\\
    &\ge\PP(X_1 \ge 3n \cup\ldots\cup X_n \ge 3n)\\
    &= 1- \PP(X_1 < 3n \cap \ldots \cap X_n < 3n)\\
    &=1-(\PP(X_1 < 3n))^n = 1-(1-\PP(X_1 \ge 3n))^n. 
\end{aligned}
\end{equation*}
We now have
\begin{equation*}
    \PP(X_1 \ge 3n) = c\sum_{k = 3n}^\infty \frac{1}{k^{3+\delta}}\ge c\int_{3n}^\infty \frac{1}{x^{3+\delta}} \,dx = \frac{c}{(3n)^{2+\delta}(2+\delta)} = \frac{w}{n^{2+\delta}},
\end{equation*}
where $w = \frac{c}{3^{2+\delta}(2+\delta)}$. This implies,
\begin{gather*}
 \PP\left(\sup_{m \ge n} \left|\frac{S_{m}}{m} - \mu\right| > \varepsilon\right) \ge 1-\left(1-\frac{w}{n^{2+\delta}}\right)^n\ge\\
 1-\frac{1}{1+\frac{w}{n^{1+\delta}}}\ge\frac{w}{2n^{1+\delta}},
\end{gather*}
where we used the inequality $(1+x)^n \le \frac{1}{1+nx}$ and $w < 1$. This yields the result.
\end{proof}

Therefore, for every $\delta > 0$, by translation, we can obtain a sequence of iid random variables with expected values equal to $0$ and finite variance, such that
     \begin{equation*}
    \PP\left(\sup_{m \ge n} \left|\frac{1}{m}S_{m}\right| > \varepsilon\right) \ge \frac{\omega}{n^{1+\delta}}.
     \end{equation*}
This example demonstrates that $O\left(\frac{1}{n}\right)$ is an optimal general power of $n$ bound for $P^*_{n,\varepsilon}$ in the case of finite variance. It however does not rule out the possibility that $P^*_{n,\varepsilon} = O\left(\frac{1}{n\log(n)}\right)$, for example.
\section{Application II: The Chen-Sung Strong Law of Large Numbers }
\label{sec:chen-sung}
Throughout this section, let $\seq{X_n}$ be a sequence of random variables with finite $p$-th moment (for some fixed $p\ge 1$) and respective means $\seq{\mu_n}$. Let $S_n := \sum_{k=1}^n X_k$ and $z_n := \sum_{i = 1}^n\mu_i$.\\

To use Theorem \ref{thrm:genquant} to obtain a quantitative version of Theorem \ref{thrm:chen}, we must find a rate of convergence for (\ref{eqn:sumII}). To do this, we need some lemmas. The first is a technical lemma that resembles the  H{\'a}jek and R{\'e}nyi inequality.
\begin{lemma}
\label{lem:sumbound}
 Suppose $\seq{\gamma_n}$ is a sequence of nonnegative real numbers satisfying
$$\EE(|S_n - z_n|^p) \le \sum_{k=1}^n \gamma_k.$$
For all $\varepsilon, \delta> 0$, $\alpha > 1$ and $0\le s \le L_\delta$:
      \begin{equation}
      \label{eqn:2parts}
    \begin{aligned}
        &\sum_{n=Q + 1}^\infty \PP\left(\left|\frac{S_{k_s^\pm(n)}}{k_s^\pm(n)} - \frac{z_{k_s^\pm(n)}}{k_s^\pm(n)}\right| > \varepsilon\right) \\
        &\qquad\qquad\le \frac{2^p\alpha^{2p}}{\varepsilon^p\floor{\alpha^{Q+2}}^p(\alpha^p - 1)}\sum_{m =1}^{\floor{\alpha^{Q+2}}} \gamma_m  +\frac{2^p\alpha^{2p}}{\varepsilon^p(\alpha^p - 1)}\sum_{m =\floor{\alpha^{Q+1}}+1}^\infty \frac{\gamma_m}{m^p}.
    \end{aligned} 
    \end{equation}
\end{lemma}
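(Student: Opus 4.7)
My plan is to reduce the claim to a purely deterministic estimate by a single application of Markov's inequality at order $p$, and then to handle the resulting double sum by a carefully chosen split. Applying Markov's inequality to the event $|S_{k_s^\pm(n)} - z_{k_s^\pm(n)}|/k_s^\pm(n) > \varepsilon$ and invoking the hypothesis on $\EE|S_n - z_n|^p$, the $n$-th probability is at most $(\varepsilon^p k_s^\pm(n)^p)^{-1}\sum_{m=1}^{k_s^\pm(n)}\gamma_m$, so the entire task reduces to bounding the deterministic object
\[
\sum_{n=Q+1}^\infty \frac{1}{k_s^\pm(n)^p}\sum_{m=1}^{k_s^\pm(n)}\gamma_m.
\]

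The structural inputs are $\floor{\alpha^n} \le k_s^\pm(n) < \alpha^{n+1}$, which follow directly from the definition of $k_s^\pm(n)$ whether or not $C_{\alpha,s,\delta,n}$ is empty. Combined with $\floor{x} \ge x/2$ for $x \ge 1$, the lower bound yields $k_s^\pm(n)^{-p} \le 2^p \alpha^{-np}$, a geometric tail in $n$; the upper bound is what lets us later trade a summation index $n_m$ for a factor of $m^{-p}$, since $k_s^\pm(n_m) \ge m$ forces $\alpha^{n_m+1} > m$, i.e.\ $\alpha^{-n_m p} < \alpha^p/m^p$. I would then split the inner sum at the threshold $M := \floor{\alpha^{Q+2}}$. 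The first block $\sum_{m=1}^M \gamma_m$ factors out and is multiplied by the geometric tail $\sum_{n\ge Q+1} k_s^\pm(n)^{-p} \le 2^p \alpha^{-Qp}/(\alpha^p-1)$, which rewrites as $2^p \alpha^{2p}/(\floor{\alpha^{Q+2}}^p(\alpha^p-1))$ via $\alpha^{-Qp} = \alpha^{2p}\alpha^{-(Q+2)p} \le \alpha^{2p}/\floor{\alpha^{Q+2}}^p$. The second block $\sum_{m=M+1}^{k_s^\pm(n)} \gamma_m$ (which is empty for $n = Q+1$, because $k_s^\pm(Q+1) < \alpha^{Q+2}$ is an integer and hence $\le M$) is handled by Fubini, and each fixed $m$ contributes a geometric tail $\sum_{n \ge n_m} k_s^\pm(n)^{-p} \le 2^p \alpha^{2p}/(m^p(\alpha^p-1))$, using the upper-bound-trick above.

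The main obstacle will be the bookkeeping of this split, and in particular checking that the threshold $M$ is set precisely so the first block is a clean product and the second block vanishes at the initial index $n = Q+1$. Aside from that, the only slight sleight of hand is replacing $\floor{\alpha^{Q+2}}+1$ by $\floor{\alpha^{Q+1}}+1$ as the lower limit of the second sum of the claim, which can only weaken the bound since the series has nonnegative terms. Everything else is routine geometric-series arithmetic, and no further probabilistic ingredient beyond Markov is needed.
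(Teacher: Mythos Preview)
Your proposal is correct and follows essentially the same route as the paper: Markov's inequality at order $p$, a split of the inner sum into a ``head'' and a ``tail'', Fubini on the tail, and geometric-series bounds via $k_s^\pm(n) > \alpha^n/2$ and $k_s^\pm(n) < \alpha^{n+1}$. The only cosmetic differences are that the paper first works with a finite upper limit $M$ and lets $M\to\infty$ at the end, and that it splits the inner sum at $k_s^\pm(Q+1)$ rather than at your $\lfloor\alpha^{Q+2}\rfloor$, passing to $\lfloor\alpha^{Q+2}\rfloor$ only afterwards via $k_s^\pm(Q+1) < \alpha^{Q+2}$; your choice of split point is arguably cleaner since it is $s$-independent from the outset.
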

\begin{proof}
   Fix $M \in \NN$. By the generalised Chebyshev's inequality, we have
    \begin{equation*}
    \begin{aligned}
        \sum_{n=Q+1}^{M} \PP\left(\left|\frac{S_{k_s^\pm(n)}}{k_s^\pm(n)} - \frac{z_{k_s^\pm(n)}}{k_s^\pm(n)}\right| > \varepsilon\right) &\le \frac{1}{\varepsilon^p}\sum_{n =Q+1}^{M} \frac{\EE\left(\left|S_{k_s^\pm(n)} - z_{k_s^\pm(n)}\right|^p\right) }{k_s^\pm(n)^p} \\
    &\le\frac{1}{\varepsilon^p}\sum_{n =Q+1}^{M}\frac{1}{k_s^\pm(n)^p} \sum_{m=1}^{k_s^\pm(n)} \gamma_m. 
    \end{aligned}    
    \end{equation*}
    Now, splitting the inner sum into two parts, observing that if  $n > Q+1$ then $k_s^\pm(n) > k_s^\pm(Q+1)$, we have that the above sum is equal to
    \begin{equation}
    \label{eqn:sum2parts}
       \frac{1}{\varepsilon^p}\sum_{n =Q+1}^{M}\frac{1}{k_s^\pm(n)^p} \sum_{m=1}^{k_s^\pm(Q+1)} \gamma_m  + \frac{1}{\varepsilon^p}\sum_{n =Q+1}^{M}\frac{1}{k_s^\pm(n)^p} \sum_{m=k_s^\pm(Q+1)+1}^{k_s^\pm(n)} \gamma_m.
    \end{equation}
    Now, interchanging summations in the first term and using $k_s^\pm(n) \ge \floor{\alpha^n} > \alpha^n/2$, we can bound the first term from above by
    \begin{equation*}
    \begin{aligned}
        \frac{2^p}{\varepsilon^p}\sum_{m =1}^{k_s^\pm(Q+1)} \gamma_m \sum_{n=Q+1}^{M}\alpha^{-pn} &\le \frac{2^p\alpha^p}{\varepsilon^p\alpha^{p(Q+1)}(\alpha^p - 1)}\sum_{m =1}^{k_s^\pm(Q+1)} \gamma_m\\
        &\le \frac{2^p\alpha^{2p}}{\varepsilon^p\alpha^{p(Q+2)}(\alpha^p - 1)}\sum_{m =1}^{k_s^\pm(Q+1)} \gamma_m \\
        &\le\frac{2^p\alpha^{2p}}{\varepsilon^p\floor{\alpha^{Q+2}}^p(\alpha^p - 1)}\sum_{m =1}^{\floor{\alpha^{Q+2}}} \gamma_m. 
    \end{aligned}
    \end{equation*}
    We bound the first line by an infinite geometric series to get the second line, and we use $\alpha^{Q+2} > k_s^\pm(Q+1)$ to get from the penultimate line to the last line.
    
    We now bound the second term in (\ref{eqn:sum2parts}) from above. Again, interchanging summations and using similar manipulations as used to obtain the bound for the first term, we get that the second is bounded above by
     \begin{equation*}
     \begin{aligned}
         &\frac{1}{\varepsilon^p}\sum_{m =k_s^\pm(Q+1)+1}^{k_s^\pm(M)} \gamma_m \sum_{\{M\ge n \ge Q+1 : k_s^\pm(n) \ge m\}}\frac{1}{k_s^\pm(n)^p}\\
         &\qquad\qquad\le \frac{2^p}{\varepsilon^p}\sum_{m =k_s^\pm(Q+1)+1}^{k_s^\pm(M)} \gamma_m \sum_{\{M\ge n \ge Q : \alpha^{n+1} \ge m\}}\alpha^{-pn}.
     \end{aligned}.
    \end{equation*}
    The inner sum is bounded by an infinite geometric series with the first term $\le m^{-p}\alpha^p$. Thus, the above is again bounded above by
     \begin{equation*}
     \begin{aligned}
         \frac{2^p\alpha^{2p}}{\varepsilon^p(\alpha^p - 1)}\sum_{m =k_s^\pm(Q+1)+1}^{k_s^\pm(M)} \frac{\gamma_m}{m^p} \le \frac{2^p\alpha^{2p}}{\varepsilon^p(\alpha^p - 1)}\sum_{m =\floor{\alpha^{Q+1}}+1}^{\floor{\alpha^{M+1}}} \frac{\gamma_m}{m^p}.
     \end{aligned}
    \end{equation*}
Taking $M \to \infty$ gives the required result.
\end{proof}
To find a rate of convergence for (\ref{eqn:sumII}), we must find one for the two terms on the right-hand side of (\ref{eqn:2parts}). A rate for the second term can easily be calculated given one for  $\sum_{m = 1}^\infty \frac{\gamma_m}{m^p}$. To obtain a rate for the second term, we need a quantitative version of what is known as Kronecker's Lemma. The author has established a more general quantitative version of Kronecker's lemma in \cite{Ner2023}, which is used to establish a quantitative version of the analogue of Chung's Strong Law of Large Numbers \cite{chung:47:LLN} on Banach spaces given in \cite{woyczynski1974random}. However, we here state and prove the special case of this result that is required for our current purposes. One can turn to \cite[Theorem $A.6.2$]{gut2005probability} for a proof of the non-quantitative result. 

\begin{lemma}[Quantitative Kronecker's lemma]
    \label{lem:kronecker}
    Let $x_1, x_2,\ldots$ be a sequence of nonnegative real numbers such that $\sum_{i=1}^\infty x_i < \infty$  and let $0< a_1 \le a_2 \le \ldots$ be such that $a_n \to \infty$. Quantitatively, suppose $\sum_{i=1}^\infty x_i < S$ for some $S > 0$ and that $s_n := \sum_{i=1}^n x_i$ converges to $\sum_{i=1}^\infty x_i$ with rate of convergence $\phi$. Further, suppose that there is a function $f: \RR \to \NN$ such that $a_{f(\omega)} \ge \omega$ for all $\omega > 0$. Then 
    \begin{equation*}
        \frac{1}{a_n}\sum_{i=1}^n a_ix_i \to 0
    \end{equation*}
    as $n \to \infty$ with rate of convergence 
       \begin{equation*}
        K_{\phi,f,\seq{a_n},S}(\varepsilon) = \max\left\{\phi\left(\frac{\varepsilon}{4}\right),f\left(\frac{4a_{\phi(\frac{\varepsilon}{4})}S}{\varepsilon}\right)\right\}.
    \end{equation*}
\end{lemma}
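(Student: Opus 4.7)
The plan is to run the standard Abel summation proof of Kronecker's lemma, but carefully tracking the threshold so that the two error terms can be tamed separately using $\phi$ and $f$. Set $b_n := \sum_{i=1}^n x_i$ with $b_0 = 0$ and $b_\infty := \sum_{i=1}^\infty x_i \le S$. Summation by parts gives
\[
\sum_{i=1}^n a_i x_i \;=\; \sum_{i=1}^n a_i (b_i - b_{i-1}) \;=\; a_n b_n - \sum_{i=1}^{n-1}(a_{i+1} - a_i) b_i,
\]
and since $a_n = a_1 + \sum_{i=1}^{n-1}(a_{i+1} - a_i)$ one can rewrite the right-hand side as
\[
\frac{1}{a_n}\sum_{i=1}^n a_i x_i \;=\; \frac{a_1 b_n}{a_n} \;+\; \frac{1}{a_n}\sum_{i=1}^{n-1}(a_{i+1} - a_i)(b_n - b_i).
\]
All terms on the right are nonnegative, and since $b_n - b_i \le b_\infty - b_i$ we may replace $b_n$ by $b_\infty$ in the last sum for the purpose of an upper bound.

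Set $N := \phi(\varepsilon/4)$. The assumption on $\phi$ means that for every $i \ge N$ we have $b_\infty - b_i \le \varepsilon/4$. I would then split the Abel sum at $N$:
\[
\sum_{i=1}^{n-1}(a_{i+1} - a_i)(b_\infty - b_i) \;=\; \sum_{i=1}^{N-1}(a_{i+1} - a_i)(b_\infty - b_i) \;+\; \sum_{i=N}^{n-1}(a_{i+1} - a_i)(b_\infty - b_i).
\]
For the first block use the crude bound $b_\infty - b_i \le S$ and telescope to obtain a total at most $S\cdot(a_N - a_1) \le S\, a_N$. For the second block use $b_\infty - b_i \le \varepsilon/4$ and telescope to obtain a total at most $(\varepsilon/4)(a_n - a_N) \le (\varepsilon/4)\,a_n$. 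Combined with the $a_1 b_n/a_n \le S a_1/a_n \le S a_N/a_n$ term (using monotonicity of $\seq{a_n}$), this yields
\[
\frac{1}{a_n}\sum_{i=1}^n a_i x_i \;\le\; \frac{2 S\, a_N}{a_n} \;+\; \frac{\varepsilon}{4}.
\]

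The final step is to force the first summand to be small using the quantitative coercivity modulus $f$. If $n \ge f\!\left(4 a_N S/\varepsilon\right)$, then by definition of $f$ we have $a_n \ge a_{f(4 a_N S/\varepsilon)} \ge 4 a_N S/\varepsilon$, hence $2 S a_N / a_n \le \varepsilon/2$. Combined with the previous bound, for every
\[
n \;\ge\; \max\!\left\{\phi\!\left(\tfrac{\varepsilon}{4}\right),\; f\!\left(\tfrac{4\, a_{\phi(\varepsilon/4)}\, S}{\varepsilon}\right)\right\}
\]
one obtains $\frac{1}{a_n}\sum_{i=1}^n a_i x_i \le \tfrac{\varepsilon}{2} + \tfrac{\varepsilon}{4} \le \varepsilon$, exactly matching the claimed rate $K_{\phi,f,\seq{a_n},S}$. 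The only real ``obstacle'' is bookkeeping — choosing $N$ and the right constants so that the tail tolerance from $\phi$ and the growth tolerance from $f$ both come out at $\varepsilon/4$ each — which is what dictates the precise factor of $4$ appearing in the displayed formula.
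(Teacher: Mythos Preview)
Your proof is correct and follows essentially the same Abel-summation strategy as the paper: write $\frac{1}{a_n}\sum a_i x_i$ via summation by parts, split the resulting sum at the index $N=\phi(\varepsilon/4)$, bound the early block by $S a_N/a_n$ and the late block by $\varepsilon/4$, and finish by invoking $f$ to make $a_n$ large. The only cosmetic difference is that you work with the rearranged identity $\frac{a_1 b_n}{a_n}+\frac{1}{a_n}\sum(a_{i+1}-a_i)(b_n-b_i)$ and exploit nonnegativity directly, whereas the paper keeps the form $s_n-\frac{1}{a_n}\sum(a_{i+1}-a_i)s_i$ and inserts $\pm s_M$; both routes give the same $2Sa_N/a_n+\varepsilon/2$ estimate and hence the same rate.
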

\begin{proof}
    Take $\varepsilon > 0$ and $n \ge  K_{\phi,f,\seq{a_n},S}(\varepsilon)$ and let $M = \phi(\frac{\varepsilon}{4})$. From the definition of $\phi$, we have for all $i \ge M$ that $|s_M - s_i| \le \frac{\varepsilon}{4}$. Summation by parts gives
    \begin{equation*}
        \left|\frac{1}{a_n}\sum_{i=1}^n a_ix_i\right| = \left|s_n - \frac{1}{a_n}\sum_{i=1}^{n-1} (a_{i+1}-a_i)s_i\right| 
    \end{equation*}
which in turn is equal to
    \begin{equation*}
       \begin{aligned}
           &\left|s_n - \frac{1}{a_n}\sum_{i=1}^{M-1} (a_{i+1}-a_i)s_i - \frac{1}{a_n}\sum_{i=M}^{n-1} (a_{i+1}-a_i)s_M - \frac{1}{a_n}\sum_{i=M}^{n-1} (a_{i+1}-a_i)(s_i-s_M)\right|\\
           &\qquad \le \left|s_n -(1-\frac{a_M}{a_n})s_M\right| + \left|\frac{1}{a_n}\sum_{i=1}^{M-1} (a_{i+1}-a_i)s_i\right| + \left|\frac{1}{a_n}\sum_{i=M}^{n-1} (a_{i+1}-a_i)(s_i-s_M)\right| \\
           &\qquad\le |s_n - s_M| +\left|\frac{a_Ms_M}{a_n}\right|+ \left|\frac{1}{a_n}\sum_{i=1}^{M-1} (a_{i+1}-a_i)s_i\right| + \left|\frac{1}{a_n}\sum_{i=M}^{n-1} (a_{i+1}-a_i)(s_i-s_M)\right| \\
           &\qquad\le \frac{\varepsilon}{4}  +\left|\frac{a_MS}{a_n}\right|+ \left|\frac{S}{a_n}\sum_{i=1}^{M-1} (a_{i+1}-a_i)\right| + \left|\frac{1}{a_n}\frac{\varepsilon}{4}\sum_{i=M}^{n-1} (a_{i+1}-a_i)\right|\\
           &\qquad\le\frac{\varepsilon}{4}  +\left|\frac{a_MS}{a_n}\right|+ \left|\frac{Sa_M}{a_n}\right| + \frac{\varepsilon}{4} \le \varepsilon.
       \end{aligned}
    \end{equation*}
\end{proof}
 We can now calculate a rate of convergence for (\ref{eqn:sumII})
 \begin{lemma}\label{lem:chiConstr}
    Suppose $\seq{X_n}$ and $\seq{\gamma_n}$ are as in Theorem \ref{thrm:chen}. Suppose $\sum_{m = 1}^\infty \frac{\gamma_n}{m^p} \le \Gamma$ for some $\Gamma > 0$ and that the partial sums converge to their limit with a strictly decreasing rate of convergence $\Psi$. For all $\varepsilon, \delta> 0$, $\alpha > 1$ and $0\le s \le L_\delta$, the function $\chi_{\varepsilon,\alpha, \Psi}$ is a rate of convergence for the partial sums of
     \begin{equation*}
          \sum_{m=1}^\infty \PP\left(\left|\frac{S_{k_s^\pm(m)}}{k_s^\pm(m)} - \frac{z_{k_s^\pm(m)}}{k_s^\pm(m)}\right| > \varepsilon\right) 
     \end{equation*}
     to their respective limits, where
     \begin{equation*}
             \chi_{\varepsilon,\alpha, \Psi}(\lambda) =  \max\left\{\log_\alpha\left(2\Psi\left(\frac{\lambda\varepsilon^p(\alpha^p -1)}{2^{p+1}\alpha^{2p}}\right)\right), \log_\alpha\left(2K_{\psi,f_p,\seq{n^p}, R}\left(\frac{\lambda}{2}\right)\right)\right\}
     \end{equation*}
     with
     \[
             \psi(\lambda) = \Psi\left(\frac{\lambda\varepsilon^p(\alpha^p - 1)}{2^p\alpha^{2p}}\right),\quad f_p(\omega) = \ceil{\omega^{\frac{1}{p}}},\quad R = \frac{2^p\Gamma\alpha^{2p}}{\varepsilon^p(\alpha^p - 1)}.
     \]
 \end{lemma}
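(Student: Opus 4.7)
The plan is to start from the estimate provided by Lemma \ref{lem:sumbound} and show that each of the two terms on the right-hand side of (\ref{eqn:2parts}) is bounded by $\lambda/2$ once $Q \ge \chi_{\varepsilon,\alpha,\Psi}(\lambda)$. Since $\chi$ is defined as a maximum of two quantities, the natural strategy is to match each branch of this maximum with one of the two terms on the right-hand side of (\ref{eqn:2parts}), dispatching the tail-of-series term directly via $\Psi$ and the Ces\`aro-type term via Quantitative Kronecker's Lemma.

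For the second (series tail) term, we require
\begin{equation*}
\sum_{m=\floor{\alpha^{Q+1}}+1}^\infty \frac{\gamma_m}{m^p} \le \frac{\lambda\varepsilon^p(\alpha^p-1)}{2^{p+1}\alpha^{2p}}.
\end{equation*}
Since $\Psi$ is a strictly decreasing rate of convergence for the partial sums of $\sum_m \gamma_m/m^p$, this bound holds as soon as $\floor{\alpha^{Q+1}} \ge \Psi\!\left(\frac{\lambda\varepsilon^p(\alpha^p-1)}{2^{p+1}\alpha^{2p}}\right)$. Using the estimate $\floor{\alpha^{Q+1}} > \alpha^{Q+1}/2$ already employed in the proof of Lemma \ref{lem:sumbound}, this is implied by $Q \ge \log_\alpha\!\left(2\Psi\!\left(\frac{\lambda\varepsilon^p(\alpha^p-1)}{2^{p+1}\alpha^{2p}}\right)\right)$, which is precisely the first branch of $\chi$.

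For the first (Ces\`aro-type) term, the key observation is that, at $n = \floor{\alpha^{Q+2}}$, the quantity $\frac{2^p\alpha^{2p}}{\varepsilon^p(\alpha^p-1)} \cdot \frac{1}{n^p}\sum_{m=1}^n \gamma_m$ is exactly the type of average Quantitative Kronecker's Lemma (Lemma \ref{lem:kronecker}) is designed to control, once one absorbs the multiplicative constant $c := \frac{2^p\alpha^{2p}}{\varepsilon^p(\alpha^p-1)}$ into the summands. Setting $x_i := c\gamma_i/i^p$ and $a_i := i^p$, one verifies the hypotheses of Lemma \ref{lem:kronecker} with upper bound $c\Gamma = R$ for $\sum_i x_i$, rate of convergence $\psi(\lambda) = \Psi(\lambda/c)$ for its partial sums, and inverse function $f_p(\omega) = \ceil{\omega^{1/p}}$ satisfying $a_{f_p(\omega)} \ge \omega$. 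Lemma \ref{lem:kronecker} then supplies a rate $K_{\psi,f_p,\seq{n^p},R}(\lambda/2)$ past which $\frac{1}{n^p}\sum_{i=1}^n n^p x_i = c \cdot \frac{1}{n^p}\sum_{i=1}^n \gamma_i$ falls below $\lambda/2$. Pushing $\floor{\alpha^{Q+2}}$ beyond this rate, using the same halving estimate, reduces to $Q \ge \log_\alpha\!\left(2 K_{\psi,f_p,\seq{n^p},R}(\lambda/2)\right)$, which is exactly the second branch of $\chi$.

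Taking $Q \ge \chi_{\varepsilon,\alpha,\Psi}(\lambda)$ enforces both branches simultaneously, yielding the required rate. The only mildly delicate point in the argument is the rescaling step used to apply Lemma \ref{lem:kronecker}: one must correctly identify the constant $c$ and track how it propagates through $\psi$ and $R$ so that the conclusion of Kronecker's Lemma is precisely the bound $\lambda/2$ on the original quantity. Once this bookkeeping is carried out, the remainder of the proof is a routine assembly of the two threshold estimates.
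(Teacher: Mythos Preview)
Your proposal is correct and follows essentially the same approach as the paper's proof: apply Lemma~\ref{lem:sumbound}, bound the tail term directly via $\Psi$ and the Ces\`aro-type term via Lemma~\ref{lem:kronecker} with the rescaled data $x_i = c\gamma_i/i^p$, $a_i = i^p$, then translate both thresholds through the estimate $\floor{\alpha^{Q+j}} \ge \alpha^{Q+j}/2$. The only blemish is a typo in the displayed Kronecker average, where $\sum_{i=1}^n n^p x_i$ should read $\sum_{i=1}^n a_i x_i = \sum_{i=1}^n i^p x_i$; the equality you write with $c \cdot \frac{1}{n^p}\sum_{i=1}^n \gamma_i$ makes clear this is just a slip and does not affect the argument.
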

 \begin{proof}
     Let $\lambda,\varepsilon, \delta> 0$, $\alpha > 1$ and $0\le s \le L_\delta$ as well as $n \ge \chi_{\varepsilon,\alpha, \Psi}(\lambda)$ be given. We have, by Lemma \ref{lem:sumbound}, that
 \begin{equation*}
    \begin{aligned}
        &\sum_{m=n + 1}^\infty \PP\left(\left|\frac{S_{k_s^\pm(m)}}{k_s^\pm(m)} - \frac{z_{k_s^\pm(m)}}{k_s^\pm(m)}\right| > \varepsilon\right)\\
        &\qquad\qquad\le \frac{2^p\alpha^{2p}}{\varepsilon^p\floor{\alpha^{n+2}}^p(\alpha^p - 1)}\sum_{m =1}^{\floor{\alpha^{n+2}}} \gamma_m  +\frac{2^p\alpha^{2p}}{\varepsilon^p(\alpha^p - 1)}\sum_{m =\floor{\alpha^{n+1}}+1}^\infty \frac{\gamma_m}{m^p}.
    \end{aligned} 
    \end{equation*}
    Now, $n \ge \chi_{\varepsilon,\alpha, \Psi}(\lambda)$ implies
 $$n \ge \log_\alpha\left(2\Psi\left(\frac{\lambda\varepsilon^p(\alpha^p -1)}{2^{p+1}\alpha^{2p}}\right)\right)$$
 and from this, we deduce
 $$\floor{\alpha^{n+1}} \ge\alpha^{n+1}/2 \ge \Psi\left(\frac{\lambda\varepsilon^p(\alpha^p -1)}{2^{p+1}\alpha^{2p}}\right),$$ 
 which in turn implies
    \begin{equation*}
        \frac{2^p\alpha^{2p}}{\varepsilon^p(\alpha^p - 1)}\sum_{m =\floor{\alpha^{n+1}}+1}^\infty \frac{\gamma_m}{m^p}\le \frac{\lambda}{2}.
    \end{equation*}
    Now, observe that the partial sums of
    $$\frac{2^p\alpha^{2p}}{\varepsilon^p(\alpha^p - 1)}\sum_{m =1}^\infty \frac{\gamma_m}{m^p}$$ 
    converge, to their limit, with rate 
    $$\psi(\lambda) = \Psi\left(\frac{\lambda\varepsilon^p(\alpha^p - 1)}{2^p\alpha^{2p}}\right).$$
    Therefore, by Lemma \ref{lem:kronecker}, we have that
    $$\frac{2^p\alpha^{2p}}{\varepsilon^pn^p(\alpha^p - 1)}\sum_{m =1}^n \gamma_m$$ 
    converges to 0 with rate $K_{\psi,f_p,\seq{n^p}, R}$. Now, $n \ge \chi_{\varepsilon,\alpha, \Psi}(\lambda)$ further implies
    $$n \ge \log_\alpha\left(2K_{\psi,f_p,\seq{n^p}, R}\left(\frac{\lambda}{2}\right)\right)$$ 
    and, arguing as above, we get
    $$\floor{\alpha^{n+2}} \ge K_{\psi,f_p,\seq{n^p}, R}\left(\frac{\lambda}{2}\right).$$
    This allows us to conclude
       \begin{equation*}
   \frac{2^p\alpha^{2p}}{\varepsilon^p\floor{\alpha^{n+2}}^p(\alpha^p - 1)}\sum_{m =1}^{\floor{\alpha^{n+2}}} \gamma_m \le \frac{\lambda}{2}
    \end{equation*}
    and we are done.
 \end{proof}
 This, in particular, allows us rather immediately to deduce Theorem \ref{thrm:quantchen} and Theorem \ref{thrm:quant:cs}.
 \begin{proof}[Proof of Theorem \ref{thrm:quantchen}]
 In the context of the assumptions of Theorem \ref{thrm:quantchen}, the rate $\chi$ in the previous Lemma \ref{lem:chiConstr} simplifies to
\begin{equation*}
    \log_\alpha\left(\max\left\{2\Psi\left(\frac{\lambda\varepsilon^p(\alpha^p - 1)}{2^{p+3}\alpha^{2p}}\right),2\left\lceil\frac{2\alpha^2}{\varepsilon}\Psi\left(\frac{\lambda\varepsilon^p(\alpha^p - 1)}{2^{p+3}\alpha^{2p}}\right)\left(\frac{8\Gamma}{\lambda(\alpha^p -1)}\right)^\frac{1}{p}\right\rceil\right\}\right),
\end{equation*}
where we use the assumption that $\Psi$ is strictly decreasing to obtain such a simplification.

 We can now apply Theorem \ref{thrm:genquant} with the rate we obtained above, observing that $\chi$ is independent of $s$ (and $\delta$) to deduce Theorem \ref{thrm:quantchen}, noting that we can take $\alpha = 1 +\frac{\varepsilon}{3W}$ and so the assumption that $\varepsilon \le 1\le W$ implies $\alpha < 4/3$ and $\alpha^p -1 \ge \alpha -1$. Furthermore, the assumptions that $\Gamma, W \ge 1$ and $0<\varepsilon\le 1$, as well as the strictly decreasing condition on $\Psi$, allows us to conclude that the second argument in the $\max$ function above is bigger than the first argument. 
 \end{proof}
 \begin{proof}[Proof of Theorem \ref{thrm:quant:cs}]
      We write $X_n = X_n^+ - X_n^-$ and apply Theorem \ref{thrm:quantchen} to each sequence $\seq{X_n^\pm}$, with $p=2$ and $\gamma_n = \mathrm{Var}(X_n^\pm) \le \mathrm{Var}(X_n)$. We then obtain the result for $\seq{X_n}$ by arguing as in Proposition \ref{thrm:betterrate}.
 \end{proof}
\section{Comments and Questions}
\label{sec:conclusion}
There are still a lot of questions that can be asked in the area of large deviations in the Strong Law of Large Numbers, which we believe would be suited to the methods of proof mining. We conclude with some potential avenues of further study:

\begin{itemize}
    \item Firstly, as mentioned in Section \ref{subsec:related}, Cram\'{e}r's original 1938 results have been generalised in \cite{petrov122006large} to sequences of random variables not assumed to be identically distributed. However, the non-identically distributed generalisation of large deviation probabilities related to the weak law of large numbers \cite{bahadur1960deviations} has not been studied. So, we can ask if one can obtain such a generalisation. Furthermore, one can ask if such a generalisation exists for the Strong Law of Large Numbers.
   \item Baum-Katz type rates are typically given by showing some series converges through the bounding of the series. As already mentioned in this article, it is a well known result in computability theory that given a bound on a sum, there is no general computable process to extract a computable rate of convergence of that sum to its limit. Thus, one avenue of study is to try to extract rates of convergence for these Baum-Katz type results. Doing so will give us a more descriptive picture of how these large deviation probabilities behave. This problem appears to have already been considered in passing by Erd\H{o}s in \cite{Erdos:49:complete}. In the case $r = 0$ of Theorem \ref{thrm:baum-katzfull},  Erd\H{o}s provides an elementary proof that condition $(i)$ implies condition $(ii)$ (this was first demonstrated by Hsu and Robbins \cite{hsu1947complete} by techniques involving Fourier analysis) as well as the converse implication. Erd\H{o}s's approach to demonstrating that $(i)$ implies $(ii)$ was to split the sum $(ii)$ into three parts. For two parts, Erd\H{o}s calculates explicit rates of convergence that are independent of the distribution of the random variables; however, for the last part, Erd\H{o}s bounds the sum, and it is unclear how one obtains a rate from this bound (that is independent of the distribution of the random variables). Thus, getting a rate from Erd\H{o}s's proof is not straightforward. One could try to obtain a rate by analysing Hsu and Robbins's proof.
   \item Uniform rates of convergence (uniform in that they do not depend on the distribution of the random variables) have been found for the Central Limit Theorem: 
\begin{theorem}[Berry \cite{berry1941accuracy} and Esseen \cite{esseen1942liapunov}]
\label{thrm:BE}
    Let $\seq{X_n}$ be iid random variables satisfying $\EE(X_1) = 0, Var(X_1) = \sigma^2 >0, \EE(|X_1|^3) = \rho < \infty$. Let $$S_n = \frac{\sum_{i=1}^nX_i}{\sqrt{\sum_{i=1}^n\sigma_i^2}}$$ $F_n$ be the cumulative distribution function of $S_n$ and $\Phi$ the cumulative distribution function of the standard normal distribution.
    Then for all $n \in \NN$ and $x \in \RR$
\begin{equation*}
    |F_n(x)- \Phi(x)| \le \frac{C\rho}{\sqrt{n}\sigma^3}
\end{equation*}
    For some $C >0$.
\end{theorem}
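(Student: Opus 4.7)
The plan is to follow the classical Fourier-analytic strategy due to Esseen. First I would reduce to the case $\sigma = 1$ by replacing each $X_i$ with $X_i/\sigma$; this normalisation leaves $\rho/\sigma^3$ invariant (up to the rescaled third moment) and turns the conclusion into the claim that $\sup_x |F_n(x) - \Phi(x)| \le C\rho/\sqrt{n}$, where $S_n = n^{-1/2}\sum_{i=1}^n X_i$. The two principal tools are the characteristic function machinery and Esseen's smoothing inequality, the latter of which converts a uniform estimate on distribution functions into an integral estimate on characteristic functions.

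The first step is to establish Esseen's smoothing inequality: there is an absolute constant $c > 0$ so that, for every $T > 0$,
\begin{equation*}
\sup_x |F_n(x) - \Phi(x)| \;\le\; \frac{1}{\pi}\int_{-T}^{T} \left|\frac{\phi_n(t) - e^{-t^2/2}}{t}\right| dt \;+\; \frac{c}{T},
\end{equation*}
where $\phi_n(t) = \EE(e^{itS_n})$. The second step is a Taylor expansion of the characteristic function $\phi$ of $X_1$: using $\EE(X_1) = 0$, $\EE(X_1^2) = 1$ and $\EE(|X_1|^3) = \rho$, one obtains
\begin{equation*}
\phi(t) = 1 - \frac{t^2}{2} + \theta(t), \qquad |\theta(t)| \;\le\; \frac{\rho |t|^3}{6}.
\end{equation*}
Since the $X_i$ are iid, $\phi_n(t) = \phi(t/\sqrt{n})^n$, so one can compare $\phi_n(t)$ to $e^{-t^2/2} = (e^{-t^2/(2n)})^n$ via the elementary inequality $|a^n - b^n| \le n\max(|a|,|b|)^{n-1}|a - b|$.

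The core estimate to establish, valid on the window $|t| \le T_n := \sqrt{n}/(4\rho)$, is
\begin{equation*}
|\phi_n(t) - e^{-t^2/2}| \;\le\; \frac{K \rho |t|^3}{\sqrt{n}}\, e^{-t^2/4}
\end{equation*}
for some absolute constant $K$. Integrating this against $1/|t|$ over $|t| \le T_n$ yields an $O(\rho/\sqrt{n})$ contribution from the first term of the smoothing inequality, while the choice $T = T_n$ gives the second term $c/T_n = 4c\rho/\sqrt{n}$; using Jensen's inequality $\rho = \EE(|X_1|^3) \ge \EE(X_1^2)^{3/2} = 1$ absorbs any residual factors into the constant and yields the desired bound.

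The main obstacle will be the delicate expansion of $\phi(t/\sqrt{n})^n$ needed to secure the Gaussian-decay factor $e^{-t^2/4}$ uniformly on $|t| \le T_n$. The Taylor bound on $\phi$ alone gives control only for $|t| = O(1)$, whereas the trivial bound $|\phi(s)| \le 1$ loses the decay entirely; the remedy is to write $\phi(s) = 1 - s^2/2 + \theta(s)$, take a principal branch of $\log \phi(s)$ for $|s|$ small enough that $|1 - \phi(s)| < 1/2$, and show $n\log\phi(t/\sqrt{n}) = -t^2/2 + O(\rho|t|^3/\sqrt{n})$ with a remainder small enough (for $|t| \le T_n$) to produce the claimed Gaussian factor after exponentiation. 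Patching this estimate with Esseen's inequality through the balanced choice $T = T_n$ then produces the bound $C\rho/\sqrt{n}$.
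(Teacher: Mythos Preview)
The paper does not prove this theorem; it is quoted in Section~\ref{sec:conclusion} as a classical result of Berry and Esseen, purely to motivate a question about uniform rates in other limit theorems. There is therefore no ``paper's own proof'' to compare against.

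On its own merits, your sketch is the standard Fourier-analytic argument and is essentially correct: reduce to $\sigma=1$, invoke Esseen's smoothing inequality to trade the Kolmogorov distance for an integral of $|\phi_n(t)-e^{-t^2/2}|/|t|$ over a window $|t|\le T$, use the third-moment Taylor bound on $\phi$ together with a logarithmic expansion to get $|\phi_n(t)-e^{-t^2/2}|\le K\rho|t|^3 n^{-1/2}e^{-t^2/4}$ on $|t|\le T_n=\sqrt{n}/(4\rho)$, and balance the two terms by choosing $T=T_n$. The one point you should be careful about is justifying the principal-branch logarithm of $\phi(t/\sqrt{n})$ uniformly on the whole window $|t|\le T_n$: you need $|1-\phi(t/\sqrt{n})|$ bounded away from $1$, which follows from the Taylor bound and the choice of constant in $T_n$, but this is exactly where the argument is delicate and where an imprecise constant can break the proof.
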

 Suppose we do not assume the random variables have a finite third moment but do have finite variance. In that case, we can still deduce that $ F_n(x) \to \Phi(x)$ by the Central Limit Theorem, but we do not get such uniform rates of convergence (Berry initially attempted to do this and could not \cite{berry1941accuracy}). A similar phenomenon appears to occur in the Strong Law of Large Numbers, as the result holds if we assume the random variables have a finite first moment. It is also not clear if one can obtain rates that are independent of the distribution by only assuming a finite first moment. However, taking one moment higher (so finite variance) allows one to obtain rates independent of the distribution of the random variables. One can investigate whether this occurs in other limit theorems. A potential case study could investigate whether a rate of convergence for the Hsu-Robbins-Erd\H{o}s sum \cite{Erdos:49:complete,hsu1947complete} (condition $(ii)$ in Theorem \ref{thrm:baum-katzfull} in the case $r = 0$) can be obtained if we assume finite third moment instead of just finite variance. 
\end{itemize}

\noindent
{\bf Acknowledgments:}
This article was written as part of the author’s PhD studies under the supervision of Thomas Powell, and I would like to thank him for his support and invaluable guidance. The author would also like to thank Nicholas Pischke for his advice on the presentation and formatting of the results in this article, as well as C\'{e}cile Mailler and Nathan Creighton for their helpful comments.

The author was partially supported by the EPSRC Centre
for Doctoral Training in Digital Entertainment (EP/L016540/1).

\bibliographystyle{unsrt}
\bibliography{BIB}
\end{document}